\def\dOi{10(3:9)2014}
\numberwithin{equation}{section}
\def\paragraph{\subsection*}
\def\restriction{\mathbin{\upharpoonright}}
\DeclareSymbolFont{symbolsC}{U}{pxsyc}{m}{n}
\DeclareMathSymbol{\diamdot}{\mathord}{symbolsC}{144}
\theoremstyle{remark}
\newtheorem*{note}{Note}
\theoremstyle{definition}
\def\F{\mathcal{F}}
\def\C{\mathcal{C}}
\def\P{\mathcal{P}}
\def\I{\mathfrak{I}}
\def\Hh{\mathfrak{H}}
\def\M{\mathfrak{M}}
\def\N{\mathfrak{N}}
\let\phi\varphi
\def\eto{\stackrel{\lower.3ex\hbox{\!$\scriptstyle\mathrm{e}$}}{\rightarrow}}
\def\hto{\stackrel{\lower.4ex\hbox{\!$\scriptstyle\mathrm{h}$}}{\rightarrow}}
\let\pieq\approx
\def\from{\ \leftarrow\ }
\DeclareMathOperator{\Csp}{CSP}
\DeclareMathOperator{\dom}{dom}
\DeclareMathOperator{\Inc}{Inc}
\DeclareMathOperator{\Gai}{Gai}
\DeclareMathOperator{\ar}{ar}
\DeclareMathOperator{\UP}{UP}
\DeclareMathOperator{\Forbh}{Forb_{h}}
\def\ocat{$\omega$-cat\-e\-gor\-i\-cal}
\def\sutst{$(\sigma\cup\tau)$-struc\-ture}
\def\sutordst{$(\sigma\cup\tau\cup\{{\preceq}\})$-struc\-ture}
\def\sigord{$(\sigma\cup\{{\preceq}\})$}
\def\sigst{$\sigma$-struc\-ture}
\def\taust{$\tau$-struc\-ture}
\def\ol#1{\bar #1}
\def\id{{\mathrm{id}}}
\def\goal{\ifmmode\mathbf{goal}\else$\mathbf{goal}$\fi}
\def\Goal{\mathbf{goal}}
\def\true{\ifmmode\mathbf{true}\else$\mathbf{true}$\fi}
\def\false{\ifmmode\mathbf{false}\else$\mathbf{false}$\fi}
\providecommand{\abs}[1]{\lvert#1\rvert}
\begin{document}

\title{On Ramsey~properties of classes with~forbidden~trees}

\author[J.~Foniok]{Jan Foniok}%
\address{Manchester Metropolitan University, School of Computing, Mathematics and Digital Technology, Chester Street, Manchester M1 5GD, England}
\email{j.foniok@mmu.ac.uk}

\keywords{forbidden substructure; amalgamation; Ramsey class; partite method}
\amsclass{05C55}

\begin{abstract}
Let $\F$ be a set of relational trees and let $\Forbh(\F)$ be the
class of all structures that admit no homomorphism from any tree
in~$\F$; all this happens over a fixed finite relational
signature~$\sigma$. There is a natural way to expand $\Forbh(\F)$
by unary relations to an amalgamation class. This expanded class,
enhanced with a linear ordering, has the Ramsey property.
Both forbidden trees and Ramsey properties have previously been linked 
to the complexity of constraint satisfaction problems.
\end{abstract}

\maketitle

\lineskiplimit -1pt

\section{Introduction}
\label{sec:intro}

Put vaguely, in Ramsey theory one looks for monochromatic subobjects
in colourings of large objects. For instance, one might want to prove
a statement like this:

\begin{em}
\begin{quotation}
\noindent
Let $A$, $B$ be digraphs and $r$ an integer.
Then there exists a digraph~$C$ such that whenever the copies of~$A$
in~$C$ are coloured with $r$~colours, then there exists a copy~$B'$ of~$B$
in~$C$ such that all the copies of~$A$ in~$B'$ have the same colour.
\end{quotation}
\end{em}

\noindent It is, however, not hard to show that this statement is false:
Let $A$ be a single arc and $B$ the directed 4-cycle.
Given any digraph~$C$, the adversary can colour the arcs of~$C$ with
$r\ge2$ colours as follows:
First, fix an arbitrary linear ordering of the vertex set of~$C$;
then colour every arc of~$C$ ``red'' if it goes ``forward'' with respect to
the ordering on its endpoints, and ``blue'' if it goes ``backward''.
Now, no matter how we order the vertices of~$B$ it will contain both a
forward and a backward arc~-- thus no copy of~$B$ in~$C$ can have all
its arcs in the same colour class.

This issue can be fixed by considering \emph{ordered} structures:
in this case we would consider digraphs with an additional linear ordering
of its vertices.
Then a ``forward'' arc and a ``backward'' arc are distinct, non-isomorphic
ordered digraphs and, in fact, the above statement becomes true.

\begin{thm}[Nešetřil--Rödl~\cite{NesRod:Partitions}]
\label{thm:11}
Let $A$, $B$ be ordered digraphs and $r$ an integer.
Then there exists an ordered digraph~$C$ such that whenever the copies of~$A$
in~$C$ are coloured with $r$~colours, then there exists a copy~$B'$ of~$B$
in~$C$ such that all the copies of~$A$ in~$B'$ have the same colour.
\end{thm}

The aim of this paper is to prove analogous results for $A$, $B$, $C$
belonging to specific classes of structures.
Perhaps the simplest example of such a result is the analogue of Theorem~\ref{thm:11} where
we replace ``ordered digraphs'' with ``ordered $K_n$-free undirected graphs'', proved
by Folkman~\cite{Fol:Graphs-with} for ${A=K_2}$, ${B=K_{n-1}}$,
as well as for $A=K_1$ and any $K_n$-free~$B$,
and by Nešetřil and Rödl~\cite{NesRod:Partitions,NesRod:Ramsey} in general.
Here we study classes of ordered digraphs (and, more generally, relational
structures) obtained not by forbidding one subgraph, such as the complete graph
in the example above, but by forbidding all homomorphic images of a
given set~$\F$ of oriented trees.
In this context a \emph{homomorphism} is a mapping that preserves the
arcs but it need not preserve the linear ordering.

Thus for a (possibly infinite) set~$\F$ of oriented trees,
let $\Forbh(\F)$ be the class of all ordered digraphs that admit no
homomorphism from any tree in~$\F$.
These classes are interesting in the context of constraint satisfaction problems.
For a finite digraph~$H$, $\Csp(H)$ denotes the class of all digraphs that admit a homomorphism to~$H$.
The case where $\Csp(H)=\Forbh(\F)$ for a set~$\F$ of trees corresponds to constraint
satisfaction problems with \emph{tree duality} (also known as
\emph{width-one} constraint satisfaction problems).
Ramsey theory provides a way to recognise digraphs~$H$ that define CSPs
with tree duality (see Section~\ref{sec:datalog}).

Now, however, another issue arises that can be illustrated with
this example:
Let $P_3$ be the directed path with three arcs and consider $\C=\Forbh(\F)$
for $\F=\{P_3\}$. For $A=P_1$, an arc ordered forward, and $B=P_2$, the
directed path with two arcs $0\to1\to2$ ordered $0\prec1\prec2$, there can
be no $P_3$-free~$C$ with the Ramsey property for $A$ and~$B$:
In any $C$ we can colour an arc ``red'' if there is another arc going out
from its head, and ``blue'' otherwise.
In any copy of $B=P_2$ in~$C$ the first arc will be red;
if there is a monochromatic copy~$B'$ of~$B$, then its second arc
must also be coloured red.
This implies, however, that there is a homomorphic image of~$P_3$ in~$C$.

The way to tackle this problem is to introduce new unary relations on
the vertices in a clever way (determined by the trees in~$\F$).
In our example, we would impose a unary relation on a vertex~$v$ (let us
call the unary relation ``square'') whenever there is an arc leaving~$v$,
and another unary relation (``circle'') whenever there is a copy of~$P_2$
leaving~$v$.
Then there is always both a square and a circle on the starting vertex
of~$P_2$, but never a circle on its middle vertex.
Hence the two arcs of~$P_2$ are no longer isomorphic induced subgraphs and we
never colour them both: $A$~cannot be an arc both with a circle on its
tail and without one.

The precise fashion in which the unary relations are introduced is described
in Section~\ref{sec:expand}.
Not always is it possible to use only finitely many unary relations.
Interestingly, it turns out that a finite number of unary relations
suffice if and only if $\Forbh(\F)=\Csp(H)$ for some finite~$H$.

The main result of this paper is the Ramsey property of any class
$\Forbh(\F)$ of ordered relational structures expanded by a number of
unary relations, with $\F$ being a set of relational trees.
The setting is properly defined in Section~\ref{sec:prelim};
the unary relations are introduced in Section~\ref{sec:expand}.
Section~\ref{sec:main} presents the main result,
which is then proved in Sections~\ref{sec:lemma} and~\ref{sec:constr}.
Section~\ref{sec:datalog} describes a link between constraint satisfaction problems
with tree duality and our Ramsey classes;
the paper then concludes with a number of final comments.


\section{Basic definitions}
\label{sec:prelim}

\paragraph{Relational structures.}

A \emph{signature}~$\sigma$ is a set of relation symbols; each
of the symbols has an associated \emph{arity}; the arity of~$R$ is~$\ar(R)$. A
\emph{$\sigma$-structure}~$A$ is a set of \emph{elements}, called the
\emph{domain} of~$A$ and denoted $\dom A$, together with a relation~$R^A$ of arity~$\ar(R)$
on the domain for every relation symbol~$R\in\sigma$.
Unless specifically stated otherwise, all structures we deal with
in this paper have finite domain. We allow the domain to be empty.
An \emph{ordered $\sigma$-structure} is a
$(\sigma\cup\{\preceq\})$-structure~$A$ such that ${\preceq}^A$~is
a linear ordering.
A \sigst~$A$ is a \emph{substructure}
of a \sigst~$B$ if $\dom A\subseteq \dom B$ and for each
$R\in\sigma$ we have $R^A=R^B\cap(\dom A)^{\ar(R)}$.
We write $A\subseteq B$ if $A$~is a substructure of~$B$.
Note that our substructure would be called an \emph{induced
substructure} in some literature.

An \emph{embedding}
of $A$ into~$B$ is a one-to-one mapping $f:\dom A\to\dom B$ such that for any
$R\in\sigma$ and any tuple~$\ol x$ we have $\ol x\in R^A$ iff $f(\ol
x)\in R^B$, where $f$~is applied on~$\ol x$ component-wise.
We write $f: A\eto B$ to indicate that $f$~is an embedding.
A bijective embedding is called an \emph{isomorphism}.
We write $A\cong B$ to indicate the existence of an isomorphism 
between $A$ and~$B$.

If $\sigma\subset\tau$, the \emph{$\sigma$-reduct} of a \taust~$A$ is the
\sigst~$A^\ast$ obtained from~$A$ by leaving out all the relations~$R^A$
for $R\in\tau\setminus\sigma$. Then $A$~is a \emph{$\tau$-expansion}
of~$A^\ast$.
(In some literature a reduct is called a \emph{shadow} and an
expansion is called a \emph{lift}.)

\paragraph{Ramsey classes.}

For any structures $A$, $B$, let $\binom{B}{A}$ denote the set of
all embeddings of~$A$ into~$B$.  The partition arrow $C\to(B)^A_r$
means that for any $\chi:\binom{C}{A}\to\{1,\dotsc,r\}$ (called
a \emph{colouring} with $r$ colours) there exists
$g\in\binom{C}{B}$ and $j\leq r$ such that $\chi(h)=j$ for all $h\in\binom{g[B]}{A}$.
In this case we call $g$ (or~$g[B]$) a \emph{monochromatic copy}
of~$B$ in~$C$.
Here $g[B]$ is the substructure of~$C$ induced by the range of~$g$,
which is isomorphic to~$B$ because $g$~is an embedding. Hence
$\binom{g[B]}{A}=\{g\circ f\colon f\in\binom{B}{A}\}$.

Let $\C$ be a class of finite structures and let $A\in\C$. The class~$\C$
has the \emph{$A$-Ramsey property} if for any $B\in\C$ and any
natural number~$r$ there exists $C\in\C$ such that $C\to(B)^A_r$.
The class~$\C$ is called a \emph{Ramsey class} if it has the
$A$-Ramsey property for all $A\in\C$.

\bigskip

Using the above terminology, Theorem~\ref{thm:11} is a special case of
the following:

\begin{thm}[Nešetřil--Rödl~\cite{NesRod:Partitions}]
\label{thm:nesrod}
Let $\sigma$ be a finite relational signature. Then the class of all
finite ordered \sigst s is a Ramsey class.
\end{thm}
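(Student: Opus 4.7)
The plan is to apply the \emph{partite method} of Nešetřil and Rödl. Given finite ordered $\sigma$-structures $A$ and $B$ and a number of colours $r$, the task is to produce a finite ordered $\sigma$-structure $C$ with $C \to (B)^A_r$.

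The heart of the argument is a \emph{Partite Lemma}. Call an ordered $\sigma$-structure \emph{$b$-partite} if its domain is partitioned into $b$ classes $V_1, \dotsc, V_b$, each a convex interval under~$\preceq$, such that every tuple in any relation $R\in\sigma$ of arity at least two has its entries in pairwise distinct classes. An embedding of $A$ into such a structure is \emph{transversal} if it meets each class in at most one point. The Partite Lemma asserts: for every $b$-partite ordered $\sigma$-structure $\mathcal{B}$ and every $r$, there exists a $b$-partite ordered $\sigma$-structure $\mathcal{C}$ such that for any $r$-colouring of the transversal copies of~$A$ in~$\mathcal{C}$, there is a transversal copy of~$\mathcal{B}$ inside~$\mathcal{C}$ whose transversal copies of~$A$ are monochromatic. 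I would prove this by induction on the number of transversal copies of~$A$ in~$\mathcal{B}$, at each step controlling one more such copy by taking a sufficiently large blow-up of a single chosen partite class and invoking the classical Ramsey theorem on the blow-up.

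Given the Partite Lemma, one assembles $C$ via the \emph{partite construction}. View $B$ as a $|B|$-partite ordered system with singleton classes. Starting from $\mathcal{P}_0 = B$, iteratively construct partite systems $\mathcal{P}_1, \mathcal{P}_2, \dotsc$ by applying the Partite Lemma, at each stage processing one further transversal copy of~$A$. After finitely many stages, forgetting the partition on the final partite system yields the desired ordered $\sigma$-structure~$C$, and a stage-by-stage unravelling of the Ramsey witnesses (from the last stage back to $\mathcal{P}_0$) shows that $C \to (B)^A_r$.

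The main obstacle will be establishing the Partite Lemma in a form that simultaneously respects the linear ordering and all the relational tuples of~$\sigma$. In particular, the classical blow-up argument must not create spurious tuples inside a single partite class, which is precisely why the partite setting is needed from the outset. The ordering on the object produced at each stage of the construction must also be built up coherently; fortunately, linear orders amalgamate freely, so the ordering on~$C$ can be chosen to respect the orderings of each partite system in the sequence, and in particular the prescribed orderings of~$A$ and~$B$.
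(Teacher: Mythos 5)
The paper does not prove this theorem; it is cited as a black box from Nešetřil--Rödl~\cite{NesRod:Partitions} and used in the proof of Theorem~\ref{thm:main}. Your sketch follows the same general technique (partite lemma plus partite construction) that the paper employs for its own main result, so the framework is appropriate, but the construction as you describe it has a genuine gap.

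The problem is in your partite construction. You take $\mathcal{P}_0 = B$ with singleton classes and iterate the partite lemma through the traces (transversal embedding patterns) of~$A$ in~$\mathcal{P}_0$. After this iteration you obtain a $|B|$-partite system $\mathcal{P}_N$ in which, inside some selected transversal copy of~$B$, the colour of every transversal copy of~$A$ depends only on its trace. But $B$ typically contains several copies of~$A$ with \emph{different} traces, and nothing forces these traces to receive the \emph{same} colour. So $\mathcal{P}_N$ with the partition forgotten need not satisfy $C\to(B)^A_r$ at all: you have merely shown that the colouring on the selected copy of~$B$ is trace-determined, not constant. What is missing is the auxiliary ordered $\sigma$-structure~$P$ with $P\to(B^\ast)^{A^\ast}_r$ (which in the unrestricted ordered case can be obtained from the classical Ramsey theorem for ordered hypergraphs, or by induction on the number of relation symbols). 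The iteration must be indexed by the copies of~$A^\ast$ in~$P$ rather than by the traces inside~$B$, the initial structure $C_0$ must be a disjoint sum of copies of~$B$, one per copy of~$B^\ast$ in~$P$, arranged $P$-partitely, and the final Ramsey step is to apply $P\to(B^\ast)^{A^\ast}_r$ to the trace colouring $\chi_0$ to find a copy of~$B^\ast$ in~$P$ all of whose $A^\ast$-traces get the same colour; the corresponding distinguished copy of~$B$ in~$C$ is then monochromatic. Without~$P$ and this final step the argument does not close. A secondary issue: once the partition is forgotten, you must also check that no relevant copy of~$A$ in~$C$ straddles parts in a way the lemma never controlled; in the paper's version this is handled by confining attention to distinguished (transversal) copies of~$B$.
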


The presence of orderings is indeed essential; cf.~the discussion
in~\cite{Nes:RamseyHom}.
Here we only note that for ordered structures $A,B$ there is a
one-to-one correspondence between the embeddings of~$A$ into~$B$
and substructures of~$B$ isomorphic to~$A$ because an ordered
structure has no non-trivial automorphisms.

\paragraph{Classes with forbidden homomorphic images.}

Let $A$, $B$ be \sigst s. A \emph{homomorphism} of~$A$ to~$B$ is a
mapping $f:\dom A\to\dom B$ such that for any $R\in\sigma$ and any
$\bar x\in R^A$ we have $f(\bar x)\in R^B$.
We write $f:A\hto B$ to indicate that $f$~is a homomorphism.

The interest of this paper lies in classes of finite \sigst s that can be
defined by forbidding the existence of a homomorphism from a given
set of structures. More explicitly, for a set~$\F$ of \sigst s let
$\Forbh(\F)$ be the class of all finite \sigst s~$A$ such that
whenever $F\in\F$, there exists no homomorphism of~$F$
to~$A$. In this case we say that \emph{$A$~is $\F$-free}.

\paragraph{Conventions.}\hfill

1.~A tuple has a bar, so $\bar x=(x_1,x_2,\dotsc,x_k)$ for some~$k$. If
$M$~is the domain of some function~$f$ and $\bar x\in M^k$, then $f(\bar
x)=(f(x_1),f(x_2),\dotsc,f(x_k))$.

2.~Instead of ``substructure of~$X$ generated by~$M$'' I write
``substructure of~$X$ induced by~$M$'' with the intended connotation
that the domain of such a substructure is actually~$M$.
This is the case because our structures have no operations.

3.~For a \sutst~$A$, $A^\ast$~almost always denotes the $\sigma$-reduct
of~$A$.

4.~Usually $R\in\sigma$ and $S\in\tau$, but sometimes $R\in\sigma\cup\tau$.

5.~I treat mappings and homomorphisms in a more set-theoretic rather
than category-the\-o\-retic way. For example, if $f:A\hto B$ is a
homomorphism and $B\subseteq C$, then also $f:A\hto C$. Similarly,
if $f:A\hto B$ is a homomorphism of \sutst s, then the same~$f$ is
a homomorphism of their $\sigma$-reducts; $f:A^\ast\hto B^\ast$.

\section{Amalgamation and other constructions}
\label{sec:amal}

A Ramsey class of structures always has the \emph{amalgamation property}
defined below (see~\cite{Nes:RamseyHom}).
Most classes of the form $\Forbh(\F)$ do
not have the amalgamation property, but following
Hu\-bič\-ka--Nešetřil~\cite{HubNes:Homomorphism,HubNes:Universal-structures}
there is a \emph{canonical way} to add new relations to the
signature~$\sigma$ in order to obtain it.
It is this \emph{expanded class}, enhanced with a linear ordering,
which is a Ramsey class.

\paragraph{Amalgamation.}

A class~$\C$ of finite \sigst s has the \emph{joint-embedding
property} if for any structures $A_1,A_2\in\C$ there exists $B\in\C$ such that
both $A_1$ and $A_2$ admit an embedding into~$B$.
A class~$\C$ of finite \sigst s has the \emph{amalgamation property}
if for any $A,B_1,B_2\in\C$ and any embeddings $f_1:A\eto B_1$ and
$f_2:A\eto B_2$ there exists $C\in\C$ and embeddings $g_1:B_1\eto C$
and $g_2:B_2\eto C$ such that $g_1\circ f_1=g_2\circ f_2$.
The amalgamation is \emph{free} if $\dom C=g_1[\dom B_1]\cup g_2[\dom B_2]$
and  $R^C=g_1[R^{B_1}]\cup g_2[R^{B_2}]$ for all $R\in\sigma$.
The amalgamation property implies the joint embedding property if
$\C$~contains the empty structure.

\paragraph{Sum.}

For two \sigst s $A$, $B$, their \emph{sum} $A+B$ is defined by
\begin{align*}
&\dom (A+B) = (\{A\}\times\dom A) \cup (\{B\}\times\dom B),\\
&R^{A+B} = (\{A\}\otimes R^A) \cup (\{B\}\otimes R^B),
\intertext{where}
&\{X\}\otimes R^X = \bigl\{\bigl((X,x_1),(X,x_2),\dotsc,(X,x_k)\bigr)\colon
	(x_1,x_2,\dotsc,x_k)\in R^X\bigr\}.
\end{align*}
Assuming that the domains of $A$ and~$B$ are disjoint, we could
take the union $\dom A\cup \dom B$ to be $\dom(A+B)$ and each
relation $R^{A+B}$ to be the union of the respective relations $R^A$
and~$R^B$. However, it will be convenient explicitly to mark which
summand an element of the sum originates from.
The definition can be extended to arbitrary finite sums in the obvious way.
We may also write $\coprod\{A_1,A_2,\dotsc,A_k\}$ for $A_1+A_2+\dotsb+A_k$.
Note that the sum of \sigst s is the coproduct in the category of
\sigst s and their homomorphisms.

Note further that if a class is closed under taking sums, then it
has the joint-embedding property. The converse is not true; for
instance, the class of complete graphs has the joint-embedding
property but is not closed under taking sums.

\paragraph{Connected structures.}
A \sigst~$A$ is \emph{connected} if, whenever $A\cong A_1+A_2$,
either $\dom A_1=\emptyset$ or $\dom A_2=\emptyset$. (This corresponds
to \emph{weak connectedness} of digraphs.)

\paragraph{Incidence graph.}
The \emph{incidence graph} $\Inc(A)$ of a \sigst~$A$ is the
bipartite undirected multigraph whose vertex set is $\dom
A\cup\bigcup\{R^A\times\{R\}:R\in\sigma\}$, and which contains for every
$R\in\sigma$, every $\bar x\in R^A$, and every~$i$, an edge joining
$(\bar x,R)$ and $x_i$.

\paragraph{Gaifman graph.}
The \emph{Gaifman graph} $\Gai(A)$ of a \sigst~$A$ is the graph
whose vertex set is $\dom A$ and there is an edge joining $x$ and~$y$
if $x,y$ are distinct elements of~$A$ that appear in a common tuple
of some relation of~$A$, that is,
\[E(\Gai(A)) = \bigl\{\{x,y\}\colon x\neq y \text{ and } \exists R\in\sigma\ \exists\bar v\in R^A\colon x,y\in\bar v\bigr\}.\]
Thus every tuple of every $R^A$ is represented by a clique in $\Gai(A)$.

\begin{lem}[see~\cite{F:Diss}]
\label{lem:connect}
For a \sigst~$A$, the following are equivalent:
\begin{enumerate}[label=\({\alph*}]
\item $A$ is connected.
\item Whenever $A\cong A_1 + A_2$, then $A\cong A_1$ or $A\cong A_2$.
\item $\Inc(A)$ is connected in the graph-theoretic sense.
\item $\Gai(A)$ is connected in the graph-theoretic sense.
\end{enumerate}
Moreover, if $A$ is connected and there is a homomorphism $f:A\hto
A_1+ A_2$, then either $f:A\hto A_1$ or $f:A\hto A_2$.
\qed
\end{lem}

\paragraph{Trees.}
A \sigst~$A$ is a \emph{$\sigma$-tree} (or just a \emph{tree}) if
$\Inc(A)$ is a tree.  (Thus in particular $A$ is \emph{not} a tree
if some tuple of some relation of~$A$ contains the same element 
more than once.)

\paragraph{Factor structure.}

If $A$ is a \sigst\ and $\sim$ is an equivalence relation on $\dom
A$, let the \emph{factor structure} $A/{\sim}$ be defined on $\dom
(A/{\sim}) = (\dom A)/{\sim}$ (the set of all equivalence classes
of~$\sim$) by letting $(X_1,X_2,\dotsc,X_k)\in R^{A/{\sim}}$ if and
only if there exist $x_1\in X_1$, $x_2\in X_2$,~\dots,~$x_k\in X_k$
such that $(x_1,x_2,\dotsc,x_k)\in R^{A}$.
Informally, $A/{\sim}$~is formed from~$A$ by identifying all the
elements in any one $\sim$-equivalence class into one element of~$A/{\sim}$.

\paragraph{Join of rooted structures.}
A \emph{rooted \sigst} is a couple $(A,a)$ where $A$~is a \sigst\
and $a\in\dom A$. Let $(A,a)$ and $(B,b)$ be rooted \sigst s. The
\emph{join} $(A,a)\oplus(B,b)$ is the factor structure $(A+B)/{\sim}$,
where $\sim$~is the equivalence relation on $\dom(A+B)$ such that
$(A,a)\sim(B,b)$, and $(X,x)\sim(X',x')$ iff $(X,x)=(X',x')$
otherwise. (Please do not get confused that, as a coincidence,
$(A,a)$ denotes a rooted structure on one occasion, and an element
of the sum $A+B$ on another.)
In other words, the join is obtained from the disjoint
union of $A$ and~$B$ by identifying $a$ and $b$.
In the obvious way, this definition can be extended to joins of
more than two structures, too.

\section{The expanded class}
\label{sec:expand}

From now on, we work in the following setting: $\sigma$~is a finite
relational signature; $\F$~is a possibly infinite set of finite
$\sigma$-trees. Recall that $\Forbh(\F)$ is the class of all \sigst
s that admit no homomorphism from any $F\in\F$.We are aiming to add
new unary relations, possibly infinitely many, to get a class~$\C$
of \sutst s such that
\begin{enumerate}
\item $\Forbh(\F)$ would be the class of the $\sigma$-reducts of
all structures in~$\C$;
\item $\C$ would have countably many isomorphism classes;
\item $\C$ would be closed under taking substructures (this is called the \emph{hereditary property});
\item $\C$ would have the amalgamation property;
\item $\C$ would have the Ramsey property.
\end{enumerate}
This can be achieved in a trivial way, by taking a new unary relation
for every element of every member of $\Forbh(\F)$ (only considering
one from each isomorphism class). We should therefore strive to
\begin{enumerate}
\setcounter{enumi}{5}
\item make $\tau$ ``as small as possible''; in particular, $\tau$~should
be finite whenever possible.
\end{enumerate}

\paragraph{Pieces of trees.}
Let $F\in\F$. An element $m$ of~$F$ is a \emph{cut} of~$F$ if it
is a vertex cut of $\Gai(F)$. Note that, as $F$~is a tree, $m$~is
a cut of~$F$ iff $m$~belongs to more than one tuple of the relations
of~$F$.

Let $m$ be a cut of~$F$ and let $D\subset\dom F$ be the vertex set
of some connected component of $\Gai(F)\setminus\{m\}$.  The rooted
\sigst\ $(M,m)$, where $M$~is the substructure of~$F$ induced by
$D\cup\{m\}$, is called a \emph{piece} of~$F$.

\begin{rems}
1.~A piece of $F$ is a non-empty connected substructure of~$F$,
$M\ne F$, and $\{m\}\ne\dom M$. Moreover, $M$~is a $\sigma$-tree.

2.~For any given cut $m$ of~$F$, the corresponding pieces cover
$\dom F$. In other words,
\[ F = \bigoplus\bigl\{(M,m)\colon (M,m) \text{ is a piece of } F\bigr\} \]
for any fixed cut $m$ of $F$.
\end{rems}

\paragraph{Equivalence of pieces.}

Let $(A,a)$ be a rooted \sigst. Following~\cite{HubNes:Universal-structures}, let
\[ \I(A,a) = \bigl\{(B,b)\colon \text{there exists $F\in\F$ s.t. } (A,a)\oplus(B,b)\cong F\bigr\} \]
be the set of all rooted \sigst s that are \emph{incompatible} with
$(A,a)$. For two pieces $(M,m)$ and $(M',m')$ we say that they are
\emph{equivalent} and write $(M,m)\pieq(M',m')$ if $\I(M,m)=\I(M',m')$.
Let $\P(\F)$ be the set of all $\pieq$-equivalence classes of all
pieces of all trees in~$\F$, that is,
\[\P(\F) = \bigl\{(M,m)\colon \exists F\in\F \text{ s.t.\ $(M,m)$ is a piece of } F\bigr\} / {\pieq} . \]

\begin{exa}
Oriented paths can be encoded by words over the alphabet $\{0,1\}$,
so that $0$~denotes a forward arc and $1$~denotes a backward arc.
Using this encoding, let $\F$ be the set of oriented paths encoded by the words
$000$, $00100$, $0010100$, $001010100$,~\dots\ (see Figure~\ref{fig:thunder}).
Let us call these paths \emph{thunderbolts}.
On the pieces of thunderbolts, the equivalence relation~$\pieq$ has
four equivalence classes, as shown in the figure.

\begin{figure}[ht]
\begin{center}
\includegraphics[width=4.8in]{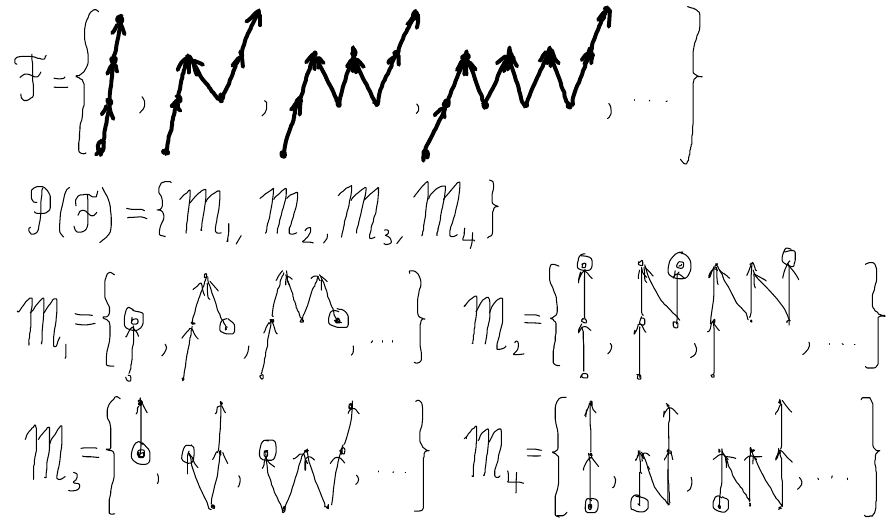}
\caption{Thunderbolts; all arcs are oriented upwards.}
\label{fig:thunder}
\end{center}
\end{figure}

For oriented paths, it can be shown that the number of $\pieq$-equivalence
classes is finite if the set~$\F$ can be described by a regular
language (as above): see~\cite{ErdTarTar:Caterpillar-dualities}.
\end{exa}

\paragraph{Subpieces.}
A \emph{subpiece} of a piece $(M,m)$ of~$F$ is any piece $(M',m')$
of~$F$ such that $M'$ is a substructure of~$M$.

\begin{lem}[\cite{HubNes:Universal-structures}]
\label{lem:subpiece}
Let $F_1\in\F$. Suppose that $(M_1,m_1)$ is a piece of $F_1$,
$(M_1',m_1')$ is a subpiece of $(M_1,m_1)$, and $(M_2',m_2')$ is
a piece such that $(M_1',m_1')\pieq(M_2',m_2')$. Create $(M_2,m_2)$
from $(M_1,m_1)$ by replacing the subpiece $(M_1',m_1')$ with
$(M_2',m_2')$, identifying~$m_1'$ with~$m_2'$; see
Figure~\ref{fig:subpiece}. Then $(M_2,m_2)$ is isomorphic to a piece
of some $F_2\in\F$ and $(M_1,m_1)\pieq(M_2,m_2)$.
\qed
\end{lem}

\begin{figure}[ht]
\begin{center}
\includegraphics[width=5in]{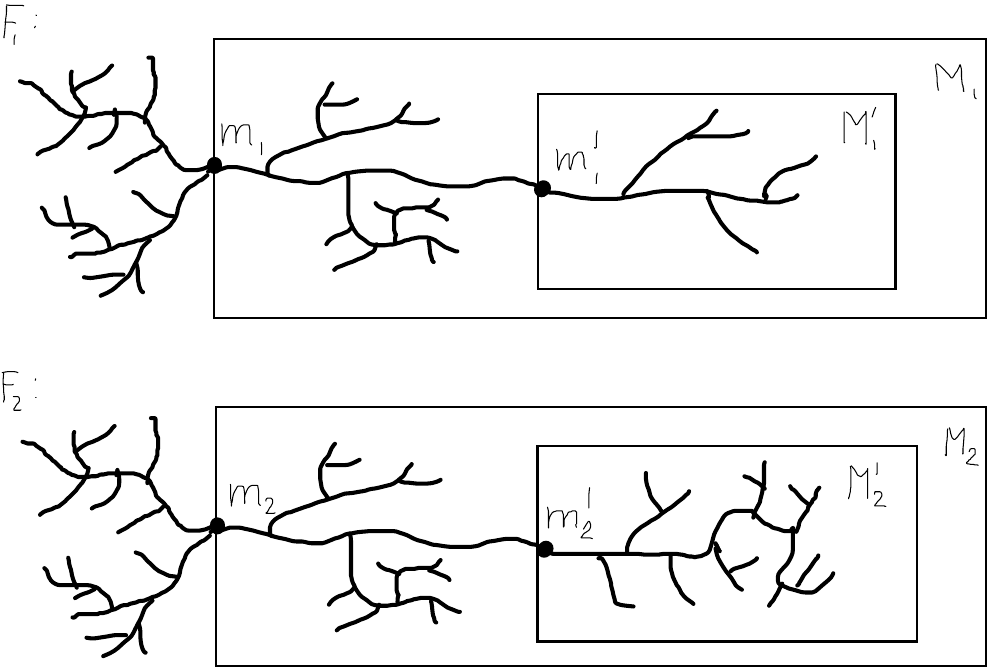}
\caption{Lemma~\ref{lem:subpiece}}
\label{fig:subpiece}
\end{center}
\end{figure}

\paragraph{Expansion.}
Now we define an expanded signature $\sigma\cup\tau$, aiming to get
a class~$\C$ of \sutst s with amalgamation, such that the
$\sigma$-reducts of all the structures in~$\C$ form exactly the
class~$\Forbh(\F)$.

\begin{defi}
\label{dfn:expanded}
Let $\tau$ contain a unary relation symbol~$S_\M$ for each 
$\M\in\P(\F)$. Let $\tilde\C$ be the class of finite
\sutst s such that $A$~belongs to~$\tilde\C$ if and only if the
$\sigma$-reduct~$A^\ast$ of~$A$ is in $\Forbh(\F)$ and for any 
$\M\in\P(\F)$ and any $x\in\dom A$ we have
\begin{equation}
\label{eq:canon}
x\in S_\M^A \qquad\iff\qquad \exists (M,m)\in\M,\ \exists f:M\hto A^\ast \text{ with }
f(m)=x.
\end{equation}
Let $\C$ be the class of all substructures of the structures in~$\tilde\C$.
The class~$\C$ is called the \emph{expanded class} for~$\Forbh(\F)$. The
structures in~$\tilde\C$ are called \emph{canonical}.
We can also say that \emph{$A$~is $\F$-free} if $A^\ast\in\Forbh(\F)$;
so being $\F$-free is a necessary but not sufficient condition for
membership in~$\C$.
\end{defi}

\begin{lem}
Every structure in~$\C$ satisfies the right-to-left implication
in~\eqref{eq:canon}.
\end{lem}

\begin{proof}
Let $A\in\C$, $x\in\dom A$, $(M,m)\in\M\in\P(\F)$, $f:M\hto A^\ast$
with $f(m)=x$. As $A\in\C$, $A$~is a substructure of some canonical
$\tilde A\in\tilde\C$. Then the same mapping~$f$ is a homomorphism
of~$M$ to~$\tilde A^\ast$. By definition, $\tilde
A$~satisfies~\eqref{eq:canon}, so $x\in S_\M^{\tilde A}$. Hence
$x\in S_\M^A$.
\end{proof}

\paragraph{Canonising.}

Given a \sutst~$A$, we want to find a
superstructure~$\tilde A$ of~$A$ that satisfies the left-to-right
implication of~\eqref{eq:canon}. This is possible assuming that
\begin{equation}
\label{eq:1elt}
\text{\emph{every one-element substructure of $A$ is in $\C$}.}
\end{equation}

\begin{lem}
\label{lem:canon}
If $A$ is a \sutst\ that satisfies \eqref{eq:1elt}, then there is
a \sutst~$\tilde A$ such that $A$~is a substructure of~$\tilde A$
and $\tilde A$~satisfies~\eqref{eq:1elt} as well as the left-to-right
implication of~\eqref{eq:canon}.
\end{lem}

\begin{proof}
For every $x\in\dom A$, let $A_x$ be the one-element substructure of~$A$ induced
by~$\{x\}$. By assumption~\eqref{eq:1elt}, for every~$x$ we have $A_x\in\C$; so
there exists $\tilde A_x\in\tilde\C$ containing~$A_x$. Let
\[A' = A + \textstyle\coprod\{\tilde A_x\colon x\in\dom A\} \]
and let $\sim$ be the smallest equivalence relation on $\dom A'$
such that $(A,x)\sim(\tilde A_x,x)$ for all $x\in\dom A$. Let $\tilde
A=A'/{\sim}$.
Informally, $\tilde A$~is obtained from~$A$ by gluing the
corresponding~$\tilde A_x$ on each element~$x$ of~$A$ (see
Figure~\ref{fig:tildA}).

\begin{figure}[ht]
\begin{center}
\includegraphics{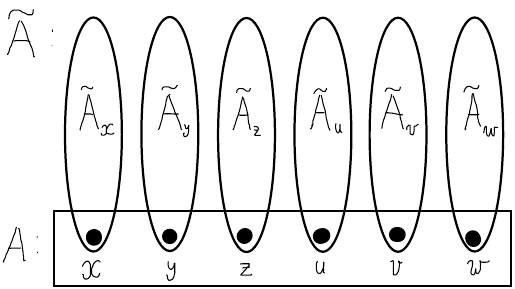}
\caption{$\tilde A$.}
\label{fig:tildA}
\end{center}
\end{figure}

Now each $\tilde A_x$ is isomorphic to a substructure of~$\tilde
A$. Moreover, any element of~$\tilde A$ is of the form $(\tilde
A_x,y)$ for some $x\in\dom A$ and some $y\in\dom\tilde A_x$.
Thus each one-element substructure of~$\tilde A$ is isomorphic
to a substructure of~$\tilde A_x\in\tilde\C$; hence it belongs to~$\C$.

Let $(\tilde A_x,y)\in S_\M^{\tilde A}$ for some $\M\in\P(\F)$. Then
$y\in S_\M^{\tilde A_x}$. Since $\tilde A_x\in\tilde\C$, there
exists $(M,m)\in\M$ and a homomorphism $f:M\hto\tilde A_x^\ast$
with $f(m)=y$. For any $n\in\dom M$, put $f'(n)=(\tilde A_x,f(n))$.
Then $f':M\hto \tilde A^\ast$ is a homomorphism with $f'(m)=(\tilde
A_x,f(m))=(\tilde A_x,y)$. Therefore $\tilde A$~satisfies the
left-to-right implication of~\eqref{eq:canon} as we wanted to show.
\end{proof}

\paragraph{Proving membership in $\C$.}

By definition, the class $\C$ (but not $\tilde\C$) is hereditary;
thus it is defined by a list of forbidden substructures. The goal
in the remainder of this section is to find an implicit description
of these forbidden substructures: we give a description in terms
of one-element and one-tuple structures. This description is going
to prove very useful in the sequel: it provides an interface between
the forbidden trees, pieces and unary relations on the one hand,
and the heavy machinery of Ramsey theory on the other.

\begin{lem}
\label{lem:one}
Let $E=E(F,m)$ be the \sutst\ obtained from some $F\in\F$ with a cut $m\in\dom F$ so that
\begin{align*}
&\dom E = \{1\};\\
&\text{for each } S_\M\in\tau,\quad 1\in S_\M^E \text{ iff\/ $\M$ contains a piece } (M,m) \text{ of } F;\\
&\text{all the $\sigma$-relations of $E$ are empty\/}.
\end{align*}
If $A$ is a \sutst\ such that there exists a homomorphism $f:E\hto A$, then $A\notin\C$.
\end{lem}

\begin{note}
In an informal way this lemma says that as soon as there are ``too
many'' $\tau$-relations on an element of a structure, this structure
is not in~$\C$. The forbidden substructures it describes are the
one-element structures with all the unary $\sigma$- and $\tau$-relations
corresponding to a fixed tree $F\in\F$ and a fixed cut~$m$ of~$F$,
as well as one-element structures with a superset of these relations.
\end{note}

\begin{proof}
Let $F\in\F$, let $m\in\dom F$ be a cut of~$F$, and consider
$E=E(F,m)$. Let the corresponding pieces of~$F$ be $(M_1,m)$,
$(M_2,m)$,~\dots,~$(M_k,m)$.

Let $A$ be a \sutst.  For the sake of contradiction, suppose that
there is a homomorphism $f:E\hto A$ but $A\in\C$. Then there
is a canonical superstructure $\tilde A\in\tilde\C$ of~$A$.

For each $i=1,2,\dotsc,k$ let $\M_i\in\P(\F)$ be the $\pieq$-equivalence
class of the piece $(M_i,m)$. By definition, $1\in S^E_{\M_i}$. As
$f$~is a homomorphism, we have $f(1)\in S^A_{\M_i}\subseteq S^{\tilde
A}_{\M_i}$. Since $\tilde A$~is canonical, by~\eqref{eq:canon} there
exists a piece $(N_i,n_i)\in\M_i$ and a homomorphism $g_i: N_i\hto
\tilde A^\ast$ with $g_i(n_i)=f(1)$. The union of all the
homomorphisms~$g_i$, $i=1,\dotsc,k$, is a homomorphism~$g$ of
$(N_1,n_1)\oplus\dotsb\oplus(N_k,n_k)$ to~$\tilde A^\ast$.

Now let \[F_0=(M_1,m)\oplus\dotsb\oplus(M_k,m)\cong F.\]
For $i\ge1$, put
\[F_i=(N_1,n_1)\oplus(N_2,n_2)\oplus\dotsb\oplus(N_i,n_i)
\oplus(M_{i+1},m)\oplus\dotsb\oplus(M_k,m).\]
We prove by induction on~$i$ that each $F_i$ is isomorphic to a
member of~$\F$. In the rest of this proof, let ``$\in\F$'' mean
``is isomorphic to a member of~$\F$'', to simplify the notation.
Clearly, $F_0\in\F$ because $F_0\cong F$. For $i\ge1$, let
\[\bar M_i=(N_1,n_1)\oplus\dotsb\oplus(N_{i-1},n_{i-1})
\oplus (M_{i+1},m)\oplus\dotsb\oplus(M_k,m),\]
whence $F_{i-1}=(\bar M_i,m)\oplus(M_i,m)$ and $F_i=(\bar
M_i,m)\oplus(N_i,n_i)$.  Assuming that $F_{i-1}\in\F$, we thus get
that $(\bar M_i,m)\in\I(M_i,m)=\I(N_i,n_i)$ because
$(M_i,m)\pieq(N_i,n_i)$. Hence $F_i\in\F$.

We conclude that $F_k\in\F$ and $g:F_k\hto\tilde A^\ast$ is a homomorphism;
hence $\tilde A$~is not $\F$-free~-- a contradiction with the
assumption that $\tilde A\in\tilde\C$.
\end{proof}

\paragraph{Tuple traces.}

Let $A$ be a \sutst, and $R\in\sigma$.  The \emph{tuple trace} of some $\bar
x=(x_1,x_2,\dotsc,x_k)\in R^A$ is the structure~$T=T(A,\bar x,R)$
with $\dom T=\{1,2,\dotsc,k\}$; $R^T=\{(1,2,\dotsc,k)\}$;
$\check R^T=\{j\colon x_j\in\check R^A\}$ for all unary $\check R\in\sigma$;
$R'^T=\emptyset$ for any other $R'\in\sigma\setminus\{R\}$; $S^T=\{j\colon
\allowbreak{x_j\in S^A}\}$ for $S\in\sigma$.

\begin{exa}
For $\sigma$ containing a quaternary relation symbol~$R$ and a binary~$R'$,
and $\tau$ containing two unary relation symbols $\diamdot$ and $\odot$, 
let $A$ be the structure with $\dom A=\{a,b,c\}$, $R^A=\{(a,b,b,c)\}$,
$R'^A=\{(a,c)\}$, $\diamdot^A=\{a,b\}$ and $\odot^A=\{c\}$. The tuple
trace of $(a,b,b,c)\in R^A$ is the structure~$T$ with $\dom
T=\{1,2,3,4\}$, $R^T=\{(1,2,3,4)\}$, $R'^T=\emptyset$,
$\diamdot^T=\{1,2,3\}$, $\odot^T=\{4\}$ (see Figure~\ref{fig:tutrace}).
\end{exa}

\begin{figure}[ht]
\begin{center}
\includegraphics[width=4.3in]{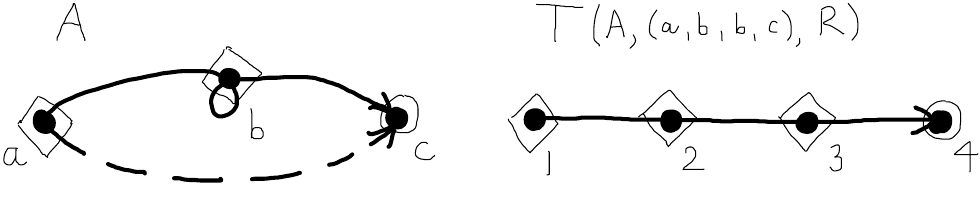}
\caption{Tuple trace for the example just before Lemma~\ref{lem:crit}.
Think of ``unfolding'' the tuple $\bar x = (a,b,b,c)\in R^A$, while
keeping all the unary relations but dropping all the non-unary ones.}
\label{fig:tutrace}
\end{center}
\end{figure}

\begin{lem}
\label{lem:crit}
Let $A$ be a \sutst. Then $A\in\C$
if and only if each one-element substructure of~$A$
belongs to~$\C$, and for any $R\in\sigma$ and any $\bar
x\in R^A$, the tuple trace of~$\bar x$ belongs to~$\C$.
\end{lem}

\begin{proof}
If $A\in\C$, then each one-element substructure of~$A$ is in~$\C$
as well because $\C$~is hereditary. Let
$A\subseteq\tilde A\in\tilde\C$. Consider any $R\in\sigma$ and $\bar
x=(x_1,\dotsc,x_k)\in R^A\subseteq R^{\tilde A}$. Let $T=T(A,\bar
x,R)$ be the tuple trace of $\bar x\in R^A$, which is, in fact,
equal to $T(\tilde A,\bar x,R)$. Let $T'$ be the sum of $T$ and $k$
copies of~$\tilde A$; let $\sim$ be the smallest equivalence
relation that identifies $j\in\dom T$ with $x_j$~in the $j$th copy
of~$\tilde A$. Let $\tilde T = T'/{\sim}$. There is an obvious
``projection'' or ``folding'' homomorphism $p:\tilde T\hto\tilde A$; the image
under~$p$ of~$T$ is the substructure of~$\tilde A$ induced by $\{x_1,x_2,\dotsc,x_k\}$
and $p$ restricted to any of the $k$~copies of~$\tilde A$ in~$\tilde T$ is an isomorphism.
Note that the same mapping~$p$ is a homomorphism of the $\sigma$-reduct
$\tilde T^\ast$ to~$\tilde A^\ast$.
See Figure~\ref{fig:tutrace2}.

\begin{figure}[ht]
\begin{center}
\includegraphics[width=4.8in]{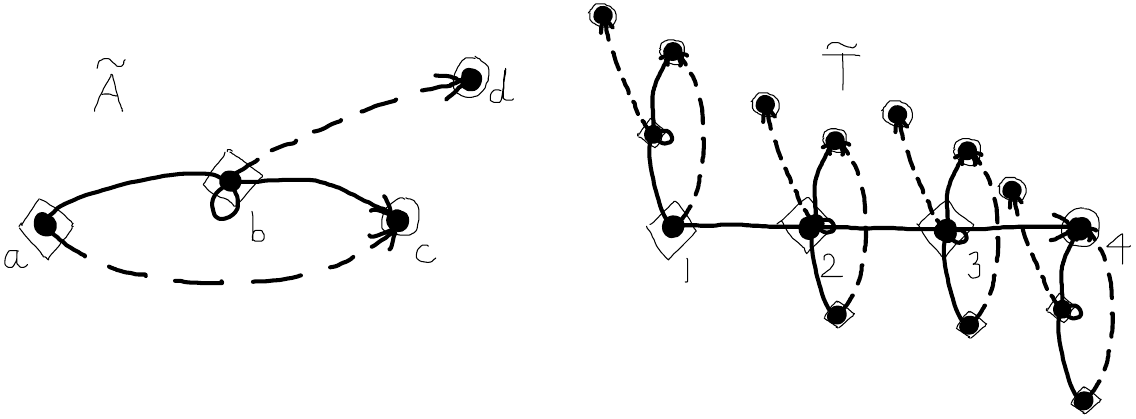}
\caption{$\tilde T$ for $\bar x\in R^A$ from Figure~\ref{fig:tutrace}.
$\tilde T$~is obtained from the tuple trace~$T$ by gluing a copy
of~$\tilde A$ on each element of~$T$.}
\label{fig:tutrace2}
\end{center}
\end{figure}

If $F\in\F$ and $f:F\hto\tilde T^\ast$, then $p\circ f:F\hto\tilde A^\ast$, a
contradiction with $A\in\C$. Thus $\tilde T$~is $\F$-free.
To show that $\tilde T$~satisfies~\eqref{eq:canon}, first let
$(M,m)\in\M\in\P(\F)$ and let $g:M\hto \tilde T^\ast$. Since $\tilde A$
satisfies~\eqref{eq:canon} and $p\circ g:M\hto\tilde A^\ast$ is a homomorphism,
we have $p(g(m))\in S_{\M}^{\tilde A}$.  Hence, by the definition
of~$\tilde T$, we have $g(m)\in S_{\M}^{\tilde T}$. Conversely, if
$x\in\dom\tilde T$ satisfies $x\in S_{\M}^{\tilde T}$ for some
$\M\in\P(\F)$, then $p(x)\in S_\M^{\tilde A}$, thus there exist
$(M,m)\in\M$ and a homomorphism $h:M\hto\tilde A^\ast$ such that $h(m)=p(x)$.
Mapping each element~$a$ of~$M$ to the element corresponding
to~$h(a)$ in the copy of~$\tilde A$ within~$\tilde T$ that contains~$x$
provides a homomorphism from~$M$ to~$\tilde T$ that takes~$m$ to~$x$.
Therefore not only $\tilde T$ is $\F$-free but it satisfies~\eqref{eq:canon} as well,
so $\tilde T\in\tilde\C$. The tuple trace~$T$, which is a substructure
of~$\tilde T$, then belongs to~$\C$.

The converse implication: Suppose that $A$~satisfies~\eqref{eq:1elt}
and all its tuple traces belong to~$\C$.  By Lemma~\ref{lem:canon} there
is a \sutst~$\tilde A$ such that $A$~is a substructure of~$\tilde
A$ and $\tilde A$~satisfies~\eqref{eq:1elt} and the left-to-right
implication of~\eqref{eq:canon}.  Observe that any tuple trace
of~$\tilde A$, as described in the proof of Lemma~\ref{lem:canon},
is equal to a tuple trace of~$A$ (hence in~$\C$ by assumption)
or to a tuple trace of some~$\tilde A_x\in\tilde \C$ (hence in~$\C$ by the first
implication of this lemma, which we have just proved). Thus we
may assume that the tuple traces of $\tilde A$ belong to~$\C$.

To prove that $\tilde A$~satisfies the right-to-left implication
of~\eqref{eq:canon}, let $\tilde A^\ast$ be the $\sigma$-reduct
of~$\tilde A$, let $\M\in\P(\F)$ and let $(M,m)\in\M$ be a piece
of some $F\in\F$ and consider any homomorphism $f:M\hto\tilde A^\ast$.
We want to show that $f(m)\in S_\M^{\tilde A}$. For the sake of
contradiction, assume that $f(m)\notin S_\M^{\tilde A}$ and that $M$~is
a minimal such piece, that is, we assume that whenever $(N,n)$ is
a subpiece of~$(M,m)$, $(N,n)\in\N\in\P(\F)$, then $f'(n)\in
S_{\N}^{\tilde A}$ for any homomorphism $f':N\hto\tilde A^\ast$.

Because $(M,m)$ is a piece, $m$~belongs to a unique tuple $\bar
x\in R^M$ with $k=\ar(R)>1$. Let $(N_1,n_1)$,
$(N_2,n_2)$,~\dots,~$(N_\ell,n_\ell)$ be all the pieces of~$F$
corresponding to all cuts~$x_i$ of~$F$, $x_i\ne m$, such that
$m\notin\dom N_j$ for any~$j$. Each $(N_j,n_j)$ is a subpiece of
$(M,m)$ and each $n_j\in\bar x$ (see Figure~\ref{fig:nj}). Let
$\iota(j)$ be the index for which $n_j=x_{\iota(j)}$, and let
$\iota(0)$ be the index for which $m=x_{\iota(0)}$. Moreover, let
$\N_j\in\P(\F)$ be the $\pieq$-equivalence class of~$(N_j,n_j)$.

\begin{figure}[ht]
\begin{center}
\includegraphics[width=5in]{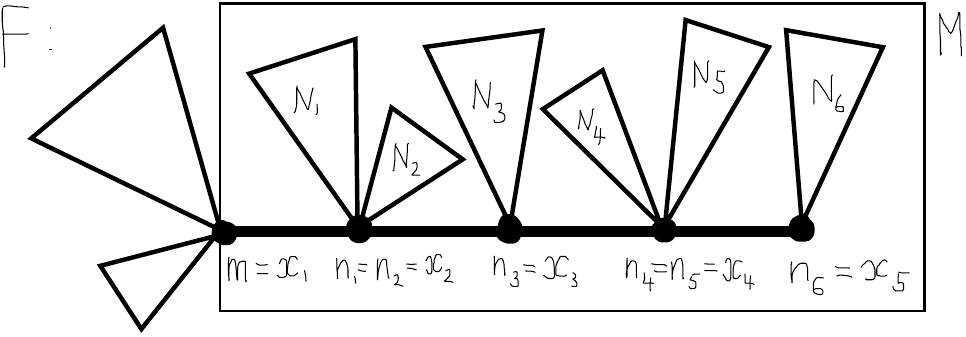}
\caption{In this example, $\ell=6$, $k=5$.}
\label{fig:nj}
\end{center}
\end{figure}

Consider the tuple trace $T=T(\tilde A,f(\bar x),R)$ of $f(\bar
x)\in R^{\tilde A}$.  As we have observed, $T\in\C$, hence there
exists canonical $\tilde T\in\tilde\C$ such that $T$~is a substructure
of~$\tilde T$. By definition, $\dom T=\{1,2,\dotsc,k\}\subseteq\dom\tilde
T$, and for each~$j$ we have $i\in S^{\tilde T}_{\N_j}$ if and only
if $f(x_i)\in S^{\tilde T}_{\N_j}$. By minimality of counterexample,
however, we have $\iota(j)\in S_{\N_j}^{\tilde T}$ for each
$j\in\{1,\dotsc,\ell\}$, as each $f_j=f\restriction N_j:N_j\hto\tilde
A^\ast$ is a homomorphism with $f_j(n_j)=f(x_{\iota(j)})$.  Because
$\tilde T$~is canonical, that is, it satisfies~\eqref{eq:canon},
for each~$j$ there exists a homomorphism~$g_j$ of some~$N_j'$
to~$\tilde T^\ast$ with $(N_j',n_j')\in\N_j$ and $g_j(n_j')=\iota(j)$.

Let $(M',m)$ be obtained from~$(M,m)$ by replacing each subpiece
$(N_j,n_j)$ with the piece~$(N_j',n_j')$. By Lemma~\ref{lem:subpiece},
$(M',m)$~is a piece and $(M',m)\pieq(M,m)$, thus $(M',m)\in\M$. Let
$g:\dom M'\to\dom\tilde T^\ast$ be defined by
\begin{alignat*}{2}
g(x_i) &= i &\quad& \text{for all } i = 1,\dotsc,k\,; \\
g(u)  &= g_j(u) && \text{for each } u\in\dom N_j' \text{ s.t.~} u\notin\bar x\,.
\end{alignat*}
Clearly $g:M'\hto\tilde T^\ast$ is a homomorphism with $g(m)=\iota(0)$.
Hence $\iota(0)\in S_\M^{\tilde T}$ and also $\iota(0)\in S^T_\M$.
By the definition of a tuple trace, $f(m)=f(x_{\iota(0)})\in S^{\tilde
A}_\M$, a contradiction.

Thus we have shown that $\tilde A$~satisfies~\eqref{eq:canon}.  Next
we show that $\tilde A$ is $\F$-free.  Suppose there is some $F\in\F$
and a homomorphism $f:F\hto\tilde A^\ast$.  If $F$~has only one
element, then the one-element substructure of~$\tilde A$ induced
by~$f[F]$ is not $\F$-free, hence
not in~$\C$, a contradiction. If $F$~has more than one element but
it is irreducible (that is, if it contains exactly one tuple~$\bar
x$ of a relation~$R^F$ of arity more than one), then the tuple trace
of~$f(\bar x)\in R^{\tilde A}$ is not in~$\C$, again a contradiction.
Hence $F$~has a cut~$m$. Also, for any piece $(M,m)\in\M$ of~$F$
the restriction $g=f\restriction M$ is a homomorphism $g:M\hto \tilde
A^\ast$ such that $g(m)=f(m)$. Thus $f(m)\in S_\M^A$ for any such
piece $(M,m)\in\M$.  Let $E=E(F,m)$ be obtained from~$F$ with the
cut~$m$ as in Lemma~\ref{lem:one}.  Then the one-element substructure
of~$\tilde A$ induced by~$\{f(m)\}$ admits a homomorphism from~$E$,
so by Lemma~\ref{lem:one} it is not in~$\C$, once again a contradiction.
We conclude that $\tilde A^\ast\in\Forbh(\F)$.

Therefore $\tilde A\in\tilde\C$, and so $A\in\C$.
\end{proof}

Next is the amalgamation property of~$\C$. The following theorem
is proved in~\cite{HubNes:Universal-structures} for the case of
$\tau$ being finite, but it does not require $\F$ to contain only trees
(the arities of $\tau$-relations will then in general be greater
than one, and the amalgamation will not be free).

\begin{thm}
\label{thm:amalgamation}
Let $\sigma$ be a finite relational signature,
let $\F$ be a set of finite $\sigma$-trees and let $\C$ be the
expanded class for $\Forbh(\F)$. Then
\begin{enumerate}
\item the class of all $\sigma$-reducts of the structures in~$\C$
	is $\Forbh(\F)$;
\item $\C$~is closed under isomorphism;
\item $\C$~is closed under taking substructures;
\item $\C$~has only countably many isomorphism classes;
\item $\C$~has the free amalgamation property.
\end{enumerate}
\end{thm}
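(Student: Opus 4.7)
The first four items are direct bookkeeping from Definition~\ref{dfn:expanded} and the preceding lemmas, while the substantive content is in~(5). The plan is to dispatch (1)--(4) quickly, then build the amalgam $C$ as the free amalgam in the signature $\sigma\cup\tau$ and verify $C\in\C$ by means of the tuple-trace criterion of Lemma~\ref{lem:crit}.

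For (1), the forward inclusion follows from hereditarity of $\Forbh(\F)$: if $A\in\C$ then $A\subseteq\tilde A\in\tilde\C$, and any homomorphism $F\hto A^\ast$ would compose with the inclusion to give $F\hto\tilde A^\ast$, contradicting $\tilde A^\ast\in\Forbh(\F)$. For the reverse inclusion, given $A^\ast\in\Forbh(\F)$ I define its canonical expansion by declaring $x\in S_\M^{\tilde A}$ exactly when the right-hand side of~\eqref{eq:canon} holds; this puts $\tilde A\in\tilde\C\subseteq\C$. Items (2) and (3) are immediate from the definition of~$\C$. For~(4), the finite signature $\sigma$ yields countably many pieces up to isomorphism and hence a countable~$\tau$; the canonical expansion is uniquely determined by the $\sigma$-reduct, so $\tilde\C$ injects into $\Forbh(\F)$ (itself countable) modulo isomorphism; finally, every canonical structure with $n$ elements has at most $2^n$ induced substructures, so $\C$ still has only countably many isomorphism classes.

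For (5), given $A,B_1,B_2\in\C$ with embeddings $f_i:A\eto B_i$, I form the free amalgam $C$ on the pushout domain $\dom C = g_1[\dom B_1]\cup g_2[\dom B_2]$, with the identification $g_1\circ f_1 = g_2\circ f_2$, and with $R^C = g_1[R^{B_1}]\cup g_2[R^{B_2}]$ for every $R\in\sigma\cup\tau$. The standard check, using the embedding property of the $f_i$ together with injectivity of all the maps, confirms that $g_1,g_2$ are embeddings. To conclude $C\in\C$ I appeal to Lemma~\ref{lem:crit}. Every $x\in\dom C$ comes from some $g_i[\dom B_i]$, and its unary relations (both $\sigma$ and~$\tau$) in $C$ coincide with those of its preimage in~$B_i$; hence each one-element substructure of~$C$ is isomorphic to a one-element substructure of some $B_i\in\C$. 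Likewise each $\bar x\in R^C$ is the $g_i$-image of some $\bar y\in R^{B_i}$, and the tuple trace of $\bar x$ in~$C$ is isomorphic to the tuple trace of $\bar y$ in~$B_i$; by Lemma~\ref{lem:crit} applied to~$B_i$ these tuple traces lie in~$\C$, and a second application of Lemma~\ref{lem:crit} gives $C\in\C$. Freeness of the amalgamation is built into the construction.

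The only genuine subtlety is the compatibility of unary relations on the shared part $g_1[f_1[\dom A]] = g_2[f_2[\dom A]]$: if some $S_\M$ could hold on a shared element in $B_1$ but not in $B_2$, the tuple traces and one-element substructures of $C$ would mix unary relations in a way not reducible to $B_1$ or $B_2$ alone, and the reduction to Lemma~\ref{lem:crit} would collapse. This is ruled out precisely because $f_1,f_2$ are embeddings and hence preserve all relations in both directions. Once this is observed, Lemma~\ref{lem:crit} carries the rest of the argument, and there is no need to reason directly about homomorphisms from the trees in~$\F$.
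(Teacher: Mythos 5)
Your proof follows essentially the same route as the paper's: items (1)--(3) from the definition, item (4) by counting canonical structures and their finitely many substructures, and item (5) by forming the free amalgam in $\sigma\cup\tau$ and verifying membership in~$\C$ via the one-element and tuple-trace criterion of Lemma~\ref{lem:crit}. The observation that embeddings force the unary relations to agree on the shared part is the same (implicit) point that makes the paper's appeal to Lemma~\ref{lem:crit} go through.
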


\begin{proof}
(1), (2), (3) follow immediately from Definition~\ref{dfn:expanded}
and from the fact that $\Forbh(\F)$ is closed under both isomorphism
and taking substructures. The class $\Forbh(\F)$ has only countably
many isomorphism classes because it is a class of finite \sigst s
over a finite relational signature~$\sigma$. The canonical
class~$\tilde\C$ contains exactly one structure for each structure
in~$\Forbh(\F)$ (the one given by~\eqref{eq:canon}), thus $\tilde\C$~also
has only countably many isomorphism classes.  Finally, $\C$~contains
finitely many structures for each structure in~$\tilde\C$, hence
$\C$~has only countably many isomorphism classes.

To prove (5), let $A,B_1,B_2\in\C$ and let $f_1:A\eto B$, $f_2:A_2\eto
B$ be embeddings. Without loss of generality we may assume that
$f_1,f_2$ are inclusion mappings, that is, $\dom A\subseteq\dom
B_1$ and $\dom A\subseteq\dom B_2$, and that $\dom B_1\cap \dom
B_2=\dom A$. Let $C$ be the \sutst\ defined as follows:
\begin{alignat*}{2}
\dom C &= \dom B_1\cup\dom B_2\,,\\
R^C &= R^{B_1} \cup R^{B_2} &\quad&\text{for any } R\in\sigma\cup\tau\,.
\end{alignat*}
Now, every one-element substructure of~$C$ is a substructure of
either $B_1$ or $B_2$ (or both), thus by Lemma~\ref{lem:crit}
$C$~satisfies~\eqref{eq:1elt}.
Moreover, whenever $\bar x\in R^C$ for some $R\in\sigma$, then $\bar
x\in R^{B_1}$ or $\bar x\in R^{B_2}$, hence by the same lemma the
tuple trace of~$\bar x\in R^C$ belongs to~$\C$. Using the converse
implication of Lemma~\ref{lem:crit} we get that $C\in\C$. It is
easy to see that the inclusion mappings are embeddings of~$B_1$
and~$B_2$ to~$C$.
\end{proof}

After all this preparation, we are ready for the main result,
presented in the next section.

\section{Main result}
\label{sec:main}

\paragraph{Orderings.}
Recall that an \emph{ordered $\upsilon$-structure} is
a $(\upsilon\cup\{\preceq\})$-structure~$A$ such that the
relation~${\preceq}^A$ is a linear ordering.

\begin{defi}
\label{dfn:3.1}
Let $\sigma$ be a finite relational signature and
let $\F$ be a set of finite $\sigma$-trees. The \emph{ordered expanded
class} for $\Forbh(\F)$ is the class~$\vec\C$ of ordered \sutst s such
that $A\in\vec\C$ if and only if ${\preceq}^A$ is a linear ordering and
the $(\sigma\cup\tau)$-reduct of~$A$ is in the expanded class~$\C$
for $\Forbh(\F)$.
\end{defi}

\begin{note}
As a consequence of Theorem~\ref{thm:amalgamation}, the class~$\vec\C$
is closed under isomorphism and taking substructures. It also has
the amalgamation property: Take the amalgam of the
$(\sigma\cup\tau)$-reducts; the union of the orders $\preceq^{B_1}$
and $\preceq^{B_2}$ is a reflexive anti-symmetric relation on~$\dom
C$ whose transitive closure is a partial ordering, and any of its
linear extensions can be taken as~$\preceq^C$.
\end{note}

\begin{thm}
\label{thm:main}
Let $\sigma$ be a finite relational signature
and let $\F$ be a set of finite $\sigma$-trees.
Then the ordered expanded class for\/ $\Forbh(\F)$ has the Ramsey property.
\end{thm}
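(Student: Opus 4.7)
The plan is to apply the partite method of Nešetřil and Rödl, deriving the partite lemma by induction in the spirit of Prömel and Voigt rather than through the Hales--Jewett theorem. The input is $A,B\in\vec\C$ and $r\ge 1$; the output we want is $C\in\vec\C$ with $C\to(B)^A_r$.

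Set $a=|\dom A|$ and list the elements of $A$ in the order $\preceq^A$ as $a_1\prec\dotsb\prec a_a$. Call an ordered structure $A$-\emph{partite} if its domain is partitioned into parts $X_1\prec\dotsb\prec X_a$ in such a way that no $\sigma$-tuple has two elements in the same part, and call an embedding of $A$ into such a structure \emph{transversal} if it sends $a_i$ into $X_i$ for every~$i$. The \emph{partite lemma} asserts: for every $A$-partite $\tilde B\in\vec\C$ there is an $A$-partite $\tilde C\in\vec\C$ such that every $r$-colouring of the transversal copies of $A$ in $\tilde C$ admits a transversal copy of $\tilde B$ all of whose transversal $A$-copies receive the same colour.

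I would prove the partite lemma by induction on the number of transversal copies of $A$ in $\tilde B$. The base case $\tilde B\cong A$ is immediate. For the inductive step, select one transversal copy of $A$ in $\tilde B$, delete a carefully chosen element of it to obtain $\tilde B'$, apply the inductive hypothesis to get an $A$-partite $\tilde C'$ that is Ramsey for $\tilde B'$, and assemble $\tilde C$ by freely amalgamating many copies of $\tilde C'$ in $\vec\C$ (using Theorem~\ref{thm:amalgamation}(5), together with an arbitrary linear extension of the partial order produced by the amalgamation). A pigeonhole applied to the deleted element across the resulting transversal $A$-copies then produces the monochromatic $\tilde B$. With the partite lemma in hand, the \emph{partite construction} is standard: start from a large ordered $\sigma$-structure $C_0$ with $C_0\to(B_0)^{A_0}_r$ furnished by the classical Nešetřil--Rödl theorem (Theorem~\ref{thm:nesrod}), where $A_0,B_0$ are ordered $\sigma$-reducts of $A$ and $B$, and iterate the partite lemma once for each copy of $A_0$ in $C_0$, stitching successive partite pieces together by free amalgamation in $\vec\C$ to obtain the desired $C$.

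The principal obstacle is ensuring that all these free amalgamations stay inside $\vec\C$. Lemma~\ref{lem:crit} is the decisive tool: membership of a \sutordst\ in $\vec\C$ reduces to checking its one-element substructures and its tuple traces. Because every such local substructure of a free amalgam already lives in one of the summands, membership in $\vec\C$ is preserved inductively at each step of the construction. The fact that $\F$ consists of connected structures (trees), combined with Lemma~\ref{lem:connect}, guarantees that no new forbidden homomorphic image is created by a free amalgamation, since any homomorphism from a connected $F\in\F$ into the amalgam must factor through one of the summands. The infinite size of $\F$, and the possibly infinite signature $\tau$ it induces, pose no additional difficulty, because all of these checks are entirely local.
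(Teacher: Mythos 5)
Your high-level plan — partite method, inductive partite lemma in the Prömel--Voigt spirit, partite construction seeded by Theorem~\ref{thm:nesrod}, Lemma~\ref{lem:crit} as the tool for preserving membership in~$\vec\C$ — matches the paper's architecture. But the inductive step of your partite lemma has a gap that I do not see how to close.

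You state the partite lemma for general ``$A$-partite'' structures (parts indexed by $\dom A$, tuples injective on parts) and propose to induct on the number of transversal $A$-copies in~$\tilde B$. In the inductive step you delete one element of one transversal copy to get~$\tilde B'$, obtain $\tilde C'$ by the inductive hypothesis, ``assemble $\tilde C$ by freely amalgamating many copies of~$\tilde C'$,'' and finish with a pigeonhole ``applied to the deleted element.'' The difficulty is that the deleted element is simply not there any more: $\tilde C'$ is Ramsey for~$\tilde B'$, not for~$\tilde B$, and gluing copies of~$\tilde C'$ over common substructures does not manufacture a vertex playing the role of the one you removed. You would need to adjoin new elements to one part of~$\tilde C'$ and declare exactly which $\sigma$-tuples they participate in so that the result still lies in~$\vec\C$ and still contains copies of~$\tilde B$ through the designated position. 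In a general $A$-partite structure there is no canonical way to do this: two elements in the same part may carry different $\tau$-relations and belong to different $\sigma$-tuples, so the enlarged structure is underdetermined. The paper's resolution is to strengthen the hypothesis to \emph{$A$-rectified} structures: equation~\eqref{eq:recti} forces a tuple to be present \emph{iff} its projection is injective and its image is a tuple of~$A$, so the entire structure is determined by~$A$ and the sizes of the fibres $\iota_X^{-1}(a)$. With that rigidity, adding $k$ elements to a single fibre is well-defined, and the paper can induct on~$|\dom A|$, peeling off one part at a time (Lemma~\ref{lem:partite}). The price is that the general partite structure~$C_0$ arising from Theorem~\ref{thm:nesrod} is \emph{not} rectified, so a separate ``rectification'' step (Lemma~\ref{lem:rectification}), which adds $\sigma$-tuples while preserving $\vec\C$ and preserving the distinguished copies of~$B$, is needed inside the partite construction before each application of the partite lemma. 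Your proposal, by stating the partite lemma for all partite structures, tries to avoid this second step, but then the lemma itself becomes unprovable by your induction.

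A smaller but genuine error: you claim that because each $F\in\F$ is connected, a homomorphism from $F$ into a free amalgam must factor through one of the two summands, invoking Lemma~\ref{lem:connect}. That lemma only covers \emph{sums}, where the two summands are disjoint. In a free amalgam $B_1\cup_A B_2$ a connected $F$ can map onto a ``path'' passing through the shared copy of~$A$ without landing in either side (think of a length-two path mapped across a cut vertex). The correct reason the amalgam stays in~$\vec\C$ is exactly the one you gave first — Lemma~\ref{lem:crit} reduces membership to one-element substructures and tuple traces, and these are local to one side of a free amalgam — so drop the connectedness argument entirely; it is both unnecessary and false.

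Finally, a cosmetic point: ``the base case $\tilde B\cong A$'' is not the same as ``one transversal copy of~$A$ in~$\tilde B$''; an $A$-partite $\tilde B$ can have a unique transversal copy while containing extra elements outside it. In the paper's formulation this issue does not arise because the base case is $|\dom A|=1$, where Proposition~\ref{prop:recti}(\ref{rec-5}) pins down the structure completely.
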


\begin{rem}
It has recently been announced by Nešetřil~\cite{Nes:CSS-Ramsey} that
the ordered expanded class is a Ramsey class if $\F$ is a \emph{finite}
set of finite connected \sigst s.
Our Theorem~\ref{thm:main} allows \emph{infinite}~$\F$, but requires that
all elements of~$\F$ are \emph{$\sigma$-trees}.
\end{rem}

\paragraph{Idea of proof.}

The proof of this theorem, which spreads over the following two
sections, is based on the ideas of \emph{partite lemma} and
\emph{partite construction}, developed by Nešetřil and
Rödl \cite{NesRod:Simple,NesRod:Ramsey,NesRod:Strong,NesRod:The-partite}.

The principal idea is the notion of a \emph{partite structure}. A
partite structure is an ordered \sutst\ whose domain is split into
several parts. The parts of a partite structure~$X$ are indexed by
elements of some ordered \sigst~$P$; formally, the part of
an element is determined by a mapping $\iota_X:\dom X\to\dom P$.
Furthermore, we want $\iota_X$ to be a homomorphism of the
\sigord-reduct of~$X$ to~$P$. Informally, we want the tuples of the
$\sigma$-relations only to sit across those parts of~$X$ where $P$
also has a corresponding tuple for the same relation symbol.
The ordering~$\preceq^A$ preserves the ordering of parts given
by~$\preceq^P$; within one part, the ordering can be arbitrary.

Another desired property of partite structures is somewhat peculiar:
We do not want a tuple~$\bar x$ of a $\sigma$-relation of~$X$ to
contain two different elements from the same part. This applies if
a tuple of~$P$ contains an element of~$P$ more than
once. For instance, let $R\in\sigma$ and $(a,b,b)\in R^P$. Then we
allow a tuple $(x,y,y)\in R^X$ if $\iota_X(x)=a$ and $\iota_X(y)=b$,
but we do not allow $(x,y,y')\in R^X$ for $y\ne y'$. Formally, we require
$\iota_X$ to be injective on the set of elements of any tuple of a relation of~$X$. 

Finally, we want all the elements of any part to belong to exactly
the same unary $\sigma$-relations as the corresponding element
of~$P$ (the $\tau$-relations are not prescribed by~$P$, which is a
\sigst, and can vary within one part of a partite structure). In addition,
if there is a ``loop'' $(a,a,\dotsc,a)\in R^P$ for some $R\in\sigma$,
we want that $(x,x,\dotsc,x)\in R^X$ for each~$x$ with $\iota_X(x)=a$.

\paragraph{Partite lemma.}

The partite lemma is often proved by an
application of the Hales--Jewett theorem (as
in~\cite{NesRod:Strong,NesRod:The-partite,NesetrilHandbookChapter}).
Our proof, however, is inspired by that of Prömel and
Voigt~\cite{ProVoi:A-short}.
We prove the Ramsey property for a class of
very special structures: \emph{rectified structures}. A rectified
structure is similar to a partite structure in that its domain is
split into parts, indexed this time by the elements of some
$A\in\vec\C$. Furthermore, if we choose one element from each part,
the induced substructure of~$X$ is isomorphic to~$A$.  Conversely,
any tuple of a relation of~$X$ lies within some such copy of~$A$
in~$X$. Thus a rectified structure~$X$ is actually fully determined
(up to isomorphism) by~$A$ and the size of each part $\iota^{-1}[a]$
of~$X$, $a\in\dom A$ (see Figure~\ref{fig:l52}). In this case,
the proof of the Ramsey property is relatively straightforward by
induction on the number of elements of~$A$ (i.e., the number of
parts), and the base step as well as each induction step follow
from the pigeon-hole principle.

\paragraph{Partite construction.}

The partite lemma is then applied repeatedly in the partite
construction (also known as the \emph{amalgamation method}). The
idea is as follows: Given $A,B\in\vec\C$, we want to find $C\in\vec\C$
such that $C\to(B)^A_r$. By Theorem~\ref{thm:nesrod} we know that
there exists an ordered \sutst~$C$ with $C\to(B)^A_r$, but there is no
guarantee that $C\in\vec\C$. Indeed, typically such~$C$ will be far
from being $\F$-free and satisfying~\eqref{eq:canon}. However, it
will be a good starting point. In fact, we will use a \sigst~$P$
such that $P\to(B^\ast)^{A^\ast}_r$, where $A^\ast$, $B^\ast$ are
the (ordered) $\sigma$-reducts of $A$,~$B$. This~$P$ serves as the
indexing structure for the parts of our partite structures.

The partite construction then works inductively, starting from~$C_0$,
which is basically
a suitable sum (disjoint union) of partite structures isomorphic
to~$B$. In each induction step (one for every occurrence of a copy
of~$A^\ast$ in~$P$), it uses the partite lemma for rectified
structures. To this end, each application of the partite lemma must
be preceded by adding new tuples to the relations in order to make
the structure rectified. This construction is called \emph{rectification};
we can show that this can be done without losing membership in~$\vec\C$
(Lemma~\ref{lem:rectification}).

The inductive construction produces a structure~$C$ with the following
property: Whenever $\binom{C}{A}$ is $r$-coloured, there exists a
copy~$C_0'$ of~$C_0$ within~$C$ such that the colour of each copy~$A'$
of~$A$ within~$C_0'$ depends solely on the parts this copy sits on
(i.e., on the image $\iota_C[A'])$. Finally, the way we construct~$C_0$
and Theorem~\ref{thm:nesrod} then guarantee the existence of a
monochromatic copy of~$B$ within this~$C_0'$.

\section{Partite lemma}
\label{sec:lemma}

Throughout this section, $\F$ is a fixed set of finite $\sigma$-trees and
$\vec\C$~is the ordered expanded class for $\Forbh(\F)$.

\paragraph{Rectified structures.}

Let $A\in\vec\C$. An \emph{$A$-rectified structure} is a pair
$(X,\iota_X)$ such that $X$~is an ordered \sutst, $\iota_X:\dom
X\to\dom A$ is a mapping, ${x\preceq^X x'}$ implies that
$\iota_X(x)\preceq^A\iota_X(x')$, and for any $R\in\sigma\cup\tau$
and any $\bar x\in(\dom X)^{\ar(R)}$ we have
\begin{equation}
\label{eq:recti}
\bar x\in R^X\qquad \iff\qquad \text{$\iota_X$ is injective on $\bar x$ and }
\iota_X(\bar x)\in R^A.
\end{equation}
(The mapping $\iota_X$ is injective on $\bar x=(x_1,x_2,\dotsc,x_k)$
if $x_i=x_j$ whenever $\iota_X(x_i)=\iota_X(x_j)$.)
Observe that $X$~is uniquely determined by $A$, $\dom X$ and $\iota_X$
via~\eqref{eq:recti}.

A mapping $e:\dom X\to\dom Y$ is an embedding of $A$-rectified structure
$(X,\iota_X)$ into $(Y,\iota_Y)$ if $e:X\eto Y$ is an embedding of \sutordst
s and $\iota_X=\iota_Y \circ e$.

\begin{prop}
\label{prop:recti}
Let $A\in\vec\C$.
\begin{enumerate}
\item\label{rec-1}
  If $(X,\iota_X)$ is $A$-rectified, then $X\in\vec\C$.
\item\label{rec-2}
  If $(X,\iota_X)$ is $A$-rectified, then the mapping $\iota_X$
  is a homomorphism of~$X$ to~$A$.
\item\label{rec-3}
  $(A,\id_A)$ is $A$-rectified.
\item\label{rec-4}
  For any $A$-rectified $(X,\iota_X)$, any mapping $e:\dom
  A\to\dom X$ such that $\iota_X\circ e=\id_A$ is an embedding of~$A$
  into~$X$, as well as an embedding of $(A,\id_A)$ into $(X,\iota_X)$.
\item\label{rec-5}
  If $\abs{\dom A}=1$ and $(X,\iota_X)$ is an $A$-rectified structure,
  then $X$ is the disjoint union of several (possibly 0 or 1) copies of~$A$,
  endowed with some linear ordering of its elements, and $\iota_X$~is
  constant.  Conversely, any such $(X,\iota_X)$ is $A$-rectified.
\end{enumerate}
\end{prop}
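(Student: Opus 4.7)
The plan is to verify the five parts in the order (2), (3), (4), (1), (5), leaning throughout on the rectified biconditional \eqref{eq:recti} and invoking Lemma~\ref{lem:crit} only for part~(1). Part~(2) is immediate: \eqref{eq:recti} gives $\iota_X(\bar x)\in R^A$ whenever $\bar x\in R^X$. Part~(3) specialises to $(X,\iota_X)=(A,\id_A)$; because $\id_A$ is injective on any set of coordinates, the right-hand side of \eqref{eq:recti} collapses to $\bar x\in R^A$, and the order condition becomes the tautology $x\preceq^A x'\Rightarrow x\preceq^A x'$. For part~(4), any $e$ with $\iota_X\circ e=\id_A$ is injective because its composition with $\iota_X$ is; moreover $\iota_X(e(\bar x))=\bar x$, so $\iota_X$ is automatically injective on $e[\{x_1,\dotsc,x_k\}]$, and \eqref{eq:recti} then yields $e(\bar x)\in R^X\iff\bar x\in R^A$ for every $R\in\sigma\cup\tau$. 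Order preservation in both directions follows: one direction is the rectified order condition applied to $e(x),e(x')$; the other direction combines linearity of $\preceq^X$ with the first, ruling out the reversed inequality via antisymmetry.

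For part~(1) I apply Lemma~\ref{lem:crit} to the $(\sigma\cup\tau)$-reduct of $X$. Each one-element substructure $\{x\}\subseteq\dom X$ has, by \eqref{eq:recti} applied to every unary $R\in\sigma\cup\tau$, the same unary content as the one-element substructure of $A$ at $\iota_X(x)$, which lies in $\C$ because $A\in\vec\C$. Each tuple trace $T(X,\bar x,R)$ for $\bar x\in R^X$ coincides, as a structure on $\{1,\dotsc,\ar(R)\}$, with $T(A,\iota_X(\bar x),R)$: \eqref{eq:recti} applied to each unary symbol at each $x_j$ gives the same membership pattern as at $\iota_X(x_j)$. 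The latter tuple trace belongs to $\C$ by Lemma~\ref{lem:crit} applied to $A$, hence so does the former; the converse implication of Lemma~\ref{lem:crit} concludes $X\in\C$, and since $\preceq^X$ is a linear order, $X\in\vec\C$.

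Part~(5) then follows from unpacking the definitions: if $\dom A=\{a\}$, then $\iota_X$ is forced to be constant, and the injectivity clause in \eqref{eq:recti} forces every tuple of a relation of $X$ of arity $\ge 2$ to have all coordinates equal. Hence the non-unary relations of $X$ consist of loops, every element of $X$ carries precisely the unary relations that $a$ carries in $A$, and $X$ is a disjoint union of one-element copies of $A$ equipped with some linear ordering; the converse verification is routine. The main technical care, throughout, is in part~(1), in checking that the tuple traces of $X$ and $A$ literally coincide as structures on the same index set; this is where one must be deliberate about how \eqref{eq:recti} interacts with tuples that have repeated coordinates, since ``injectivity on $\bar x$'' is understood as injectivity on the underlying set of coordinates and thus remains compatible with loop tuples in $R^A$.
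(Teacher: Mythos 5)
Your proof is correct and takes essentially the same route as the paper: parts (2), (3), (4), (5) unpack~\eqref{eq:recti} directly, and part (1) matches one-element substructures and tuple traces of~$X$ against those of~$A$ and applies Lemma~\ref{lem:crit} in both directions. Your more explicit handling of the forward order direction in part~(4), via linearity and antisymmetry of~$\preceq^X$, is a welcome elaboration of the paper's terse ``an analogous argument applies to preservation of~$\preceq$,'' since the rectified order condition is stated only one-directionally and is not covered by~\eqref{eq:recti}.
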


\begin{proof}
\leavevmode\par

(\ref{rec-1}) Let $\hat A,\hat X$ be the $(\sigma\cup\tau)$-reducts
of $A,X$, respectively. We apply Lemma~\ref{lem:crit}. Let $x\in\dom
\hat X$. By~\eqref{eq:recti}, the one-element substructure~$\hat X_x$
of~$\hat X$ induced by~$x$ is isomorphic to the substructure
of~$\hat A$ induced by~$\iota_X(x)$. Thus $\hat X_x\in\C$ because
$\hat A\in\C$. Next, for any $R\in\sigma$ and $\bar x\in R^{\hat X}$,
the tuple trace $T=T(\hat X,\bar x,R)$ is equal to the tuple trace
$T(\hat A,\iota_X(\bar x),R)$ of~$\iota_X(\bar x)\in R^{\hat A}$
because of~\eqref{eq:recti}.  Hence $T\in\C$. By Lemma~\ref{lem:crit},
$\hat X\in\C$. Therefore $X\in\vec\C$.

(\ref{rec-2}) All $\sigma$- and $\tau$-relations are preserved by~\eqref{eq:recti},
and $\preceq$~is preserved by definition.

(\ref{rec-3}) Obvious.

(\ref{rec-4}) Since $\iota_X\circ e$ is injective, $e$~is injective
and $\iota_X$~is injective on any tuple of~$e[A]$. Let
$R\in\sigma\cup\tau$. If $\bar a\in R^A$, then $\iota_X(e(\bar
a))=\bar a$, so by~\eqref{eq:recti}, $e(\bar a)\in R^X$. Conversely,
if $\bar a\in(\dom A)^{\ar(R)}$ and $e(\bar a)\in R^X$, then $\bar
a=\iota_X(e(\bar a))\in R^A$ because $\iota_X$~is a homomorphism.
An analogous argument applies to preservation of~$\preceq$.

(\ref{rec-5}) If $\iota_X:\dom X\to\dom A$ and $\abs{\dom A}=1$, then certainly
$\iota_X$~is constant. But then if $\bar x\in R^X$ for some
$R\in\sigma\cup\tau$, then $\bar x$~is of the form $(x,x,\dotsc,x)$
because $\iota_X$~is injective on~$\bar x$. Hence the only relation
of~$X$ that spans distinct elements of~$X$ is the linear
ordering~$\preceq^X$. It follows from~\eqref{eq:recti} that each
one-element substructure of~$X$ is isomorphic to~$A$.

The converse is obvious.
\end{proof}

\begin{lem}[Partite Lemma]
\label{lem:partite}
Let $\F$ be a set of finite $\sigma$-trees and let $\vec\C$ be
the ordered expanded class for $\Forbh(\F)$; let $A\in\vec\C$.
Let $(B,\iota_B)$ be $A$-rectified; let $r\geq1$. Then
there exists $A$-rectified $(E,\iota_E)$ such that
$(E,\iota_E)\to(B,\iota_B)^{(A,\id_A)}_r$.
\end{lem}

\begin{proof}
By induction on $\abs{\dom A}$. If $\abs{\dom A}=1$, take $E$ to be the disjoint union of
$r\cdot(\abs{\dom B}-1)+1$ copies of~$A$ with an arbitrary linear
ordering~$\preceq^E$; $\iota_E$~is constant. This is an $A$-partite
structure by Proposition~\ref{prop:recti}(\ref{rec-5}).
Since~-- by the same Proposition~-- $B$~is an ordering of the disjoint union of $\abs{\dom B}$ copies
of~$A$, any substructure of~$E$ on $\abs{\dom B}$ elements is isomorphic
to~$B$. In any $r$-colouring of~$\dom E$ there exist $\abs{\dom B}$ elements
of the same colour, inducing a monochromatic copy of~$B$.

If $\abs{\dom A}\geq2$, assume that $\dom A=\{0,1,\dotsc,n\}$. Let $A'$ be the
substructure of~$A$ induced by the subset $\{1,\dotsc,n\}$; let
$B'$ be the substructure of~$B$ induced by $\iota^{-1}_B[\{1,\dotsc,n\}]$,
and $\iota_{B'}=\iota_B\restriction \dom B'$. Then $(B',\iota_{B'})$ is
$A'$-rectified. Apply induction to get $A'$-rectified $(E',\iota_{E'})$
such that $(E',\iota_{E'})\to (B',\iota_{B'})^{(A',\iota_{A'})}_{r^k}$,
where $k=r\cdot\left(\abs{\iota^{-1}_B(0)}-1\right)+1$.  Assuming that $\dom
E'\cap\{1,2,\dotsc,k\}=\emptyset$ let $\dom E=\dom E'\cup\{1,2,\dotsc,k\}$
and define $\iota_E(x)=0$ if $x\in\{1,2,\dotsc,k\}$ and
$\iota_E(x)=\iota_{E'}(x)$ otherwise. Let all $(\sigma\cup\tau)$-relations
of~$E$ be defined by~\eqref{eq:recti}; let $\preceq^E$ be an extension
of~$\preceq^{E'}$ that is preserved by~$\iota_E$. Thus $E'$~is the
substructure of~$E$ on $\iota_E^{-1}[\{1,\dotsc,n\}]$.
See Figure~\ref{fig:l52}. Clearly $(E,\iota_E)$ is $A$-rectified.

\begin{figure}[ht]
\begin{center}
\includegraphics[width=4.5in]{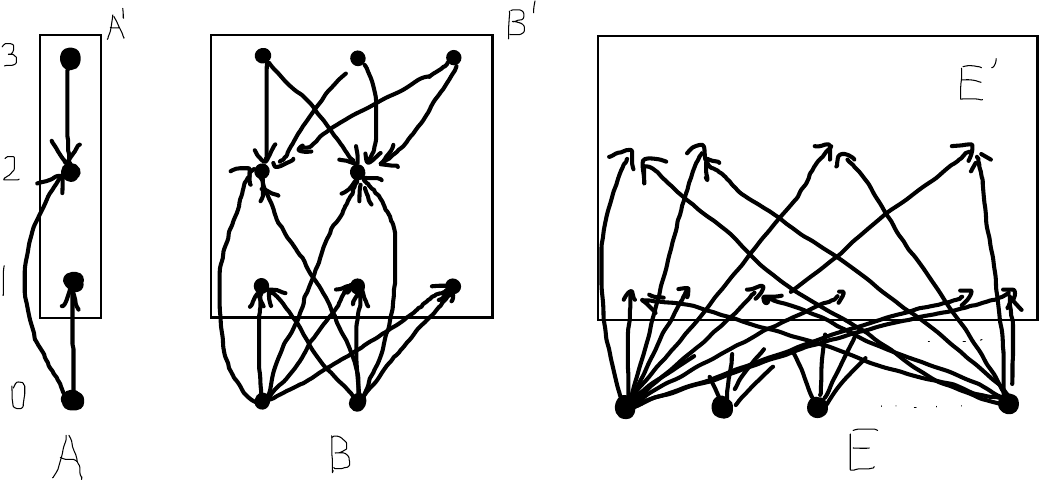}
\caption{Lemma~\ref{lem:partite}.}
\label{fig:l52}
\end{center}
\end{figure}

To prove that $(E,\iota_E)\to(B,\iota_B)^{(A,\id_A)}_r$, consider
any $r$-colouring~$\chi$ of $\tbinom{(E,\iota_E)}{(A,\id_A)}$. Define
$\chi':\tbinom{(E',\iota_{E'})}{(A',\iota_{A'})}\to\{1,\dotsc,r\}^{\iota_E^{-1}(0)}$
by $\chi'(e')=\bigl(c\mapsto\chi(e'\cup(0\mapsto c))\bigr)$.
That is, the $\chi'$-colour of a copy of~$A'$ in~$E'$ is a vector
of $\chi$-colours, one for each of the $k$ extensions of the copy
of~$A'$ by an element in the 0th part to a copy of~$A$ in~$E$.  By the
definition of $(E',\iota_{E'})$, there is a monochromatic
$g'\in\tbinom{(E',\iota_{E'})}{(B',\iota_{B'})}$. Hence
for any fixed $c\in\iota_E^{-1}(0)=\{1,2,\dotsc,k\}$, the mapping
$\phi_c:h'\mapsto \chi((g'\circ h')\cup(0\mapsto c)$) is constant
on $\tbinom{(B',\iota_{B'})}{(A',\id_{A'})}$. Define
$\psi:\iota_E^{-1}(0)\to\{1,\dotsc,r\}$ by setting
$\psi(c)$ to be the constant value of~$\phi_c$. Since
$\abs{\iota_E^{-1}(0)}=k>r\left(\abs{\iota_B^{-1}(0)}-1\right)$, there exists a subset
$M\subseteq\iota_E^{-1}(0)$ with $\abs{M}=\abs{\iota_B^{-1}(0)}$ such that
$\psi$~is constant on~$M$. Define $g\in\tbinom{(E,\iota_E)}{(B,\iota_B)}$
to be an extension of~$g'$ by the $\preceq$-preserving bijection of
$\iota_B^{-1}(0)$ and~$M$.  Then $g$~is monochromatic.
\end{proof}

\section{Partite construction}
\label{sec:constr}

This section is devoted to finishing the proof of Theorem~\ref{thm:main}.
Again, $\F$ is a fixed set of finite $\sigma$-trees and $\vec\C$~is
the ordered expanded class for $\Forbh(\F)$.

\paragraph{Partite structures.}
Let $P$ be an ordered \sigst.  A \emph{$P$-partite $\vec\C$-structure}
is a pair $(A,\iota_A)$ where $A\in\vec\C$ and $\iota_A:\dom A\to\dom
P$ is a homomorphism of the \sigord-reduct~$A^\ast$ of~$A$ to~$P$
that is injective on any tuple of the relation~$R^A$ for any $R\in\sigma$,
and such that the restriction of~$\iota_A$ to any one-element substructure of~$A^\ast$
is an embedding of this one-element \sigord-structure into~$P$.
For an element~$a$ of~$A$ or a tuple~$\bar a$, the image $\iota_A(a)$
or $\iota_A(\bar a)$ is called the \emph{trace} of~$a$ or~$\bar a$.
A $P$-partite $\vec\C$-structure~$(A,\iota_A)$ is \emph{transversal}
if $\iota_A$~is an embedding of~$A^\ast$ to~$P$.

A mapping $e:\dom A\to\dom B$ is an embedding of a $P$-partite
$\vec\C$-structure $(A,\iota_A)$ into $(B,\iota_B)$ if $e:A\eto B$ is an
embedding of \sutordst s and $\iota_A=\iota_B \circ e$.

\begin{lem}[``rectification'']
\label{lem:rectification}
Let $\vec\C$ be the ordered expanded class for\/ $\Forbh(\F)$, where $\F$~is
a set of finite $\sigma$-trees.
Let $(C,\iota_C)$ be a $P$-partite $\vec\C$-structure for some \sigst~$P$.
If $(D,\iota_D)$ is defined by setting
\begin{equation}
\label{eq:rectidef}
\begin{aligned}
&\dom D = \dom C,\\
&\iota_D = \iota_C,\\
&S^D = S^C \text{ for $S\in\tau$},\\
&{\preceq}^D = {\preceq}^C,\\
&\text{for $R\in\sigma$:}\\
&\quad\bar x \in R^D \iff 
\begin{aligned}[t]
&\iota_D\text{ is injective on $\bar x$, and}\\
&\exists\bar y\in R^C\colon \iota_C(\bar y)=\iota_D(\bar x)
\text{ and } \forall i,\ \{S\in\tau\colon x_i\in S^D\} = \{S\in\tau\colon y_i\in S^C\},
\end{aligned}
\end{aligned}
\end{equation}
then $(D,\iota_D)$ is a $P$-partite $\vec\C$-structure.
Moreover, $R^C\subseteq R^D$ for any $R\in\sigma$;
if $R$~is unary, then $R^C=R^D$.
\end{lem}

\begin{proof}
It is straightforward that $\iota_D$ is a homomorphism of the
\sigord-reduct~$D^\ast$ to~$P$ because $\iota_C$~is a homomorphism of~$C^\ast$
to~$P$. By definition, $\iota_D$~is injective on any tuple of any
$\sigma$-relation of~$D$, and every one-element substructure
of~$D$ is isomorphic to the corresponding one-element substructure
of~$C$.

To show that $D\in\vec\C$, first apply the ``only if'' direction
of Lemma~\ref{lem:crit} to prove that the tuple trace of any $\bar
y\in R^C$ is in~$\C$ because $C\in\vec\C$. Then observe that the tuple trace of any
$\bar x\in R^D$ is equal to the tuple trace of some $\bar y\in R^C$.
Also, every one-element substructure of~$D$ is equal to the corresponding
one-element substructure of~$C$. Finally
apply the ``if'' direction of Lemma~\ref{lem:crit}.

If $\bar x\in R^C$ then $\bar y=\bar x$ can be taken to show that
$\bar x\in R^D$. Thus $R^C\subseteq R^D$. If $R$~is unary and $x\in
R^D$, then there is $y\in R^C$ with $\iota_C(y)=\iota_C(x)$; hence
$x\in R^C$ because $\iota_C$ restricted to any one-element structure
is an embedding (by the definition of a $P$-partite structure).
Therefore $R^C=R^D$ for all unary~$R\in\sigma$.
\end{proof}

Observe that the $P$-partite $\vec\C$-structure $(D,\iota_D)$ from
Lemma~\ref{lem:rectification} is \emph{``rectified''} in the following sense:
\begin{multline}
\label{eq:part-rect}
\text{For any $R\in\sigma$ and any $\bar y\in R^D$, if $\bar x$ is a tuple
such that $\iota_D(\bar x)=\iota_D(\bar y)$,}\\
\text{$\iota_D$ is injective on $\bar x$,
and $\{S\in\tau\colon y_i\in S^D\} = \{S\in\tau\colon x_i\in S^D\}$ for all $i$,
then $\bar x\in R^D$.}
\end{multline}
Lemma~\ref{lem:rectification} asserts that any $P$-partite
$\vec\C$-structure $(C,\iota_C)$ can be transformed into $(D,\iota_D)$
that satisfies~\eqref{eq:part-rect} by adding tuples to (non-unary)
$\sigma$-relations.  Note that if $(C,\iota_C)$ already
satisfies~\eqref{eq:part-rect} and $(D,\iota_D)$ is defined
by~\eqref{eq:rectidef}, then no tuples will be added and
$(D,\iota_D)=(C,\iota_C)$. In particular, this is the case if
$(C,\iota_C)$ is transversal.

\paragraph{Rectified substructures.}

The next lemma will apply in the proof of Theorem~\ref{thm:main}
in the following situation: Start with $(D,\iota_D)$ which is
rectified in the above sense, that is, it satisfies~\eqref{eq:part-rect},
and an ordered structure $A\in\vec\C$. Split the elements of~$A$
into parts so that $A$ would be a transversal $P$-partite $\vec\C$-structure
($\iota_A:\dom A\to\dom P$ is an embedding of~$A^\ast$ into~$P$).
Select those elements of~$D$
\begin{enumerate}[label=(\roman*)]
\item whose trace lies in the trace of~$A$, and
\item whose unary $\tau$-relations are exactly the same as the unary
$\tau$-relations of the corresponding element of~$A$.
\end{enumerate}
The selected elements induce a substructure~$B$ of~$D$. There is a
natural way to define $\iota_B:\dom B\to\dom A$ so that an element
of~$B$ would be mapped to the element of~$A$ in the same $P$-part
(that is, $\iota_A(\iota_B(b))=\iota_D(b)$).  Lemma~\ref{lem:rect-rect}
claims that such $(B,\iota_B)$ is $A$-rectified.

\begin{lem}
\label{lem:rect-rect}
Let $(D,\iota_D)$ be a $P$-partite $\vec\C$-structure
satisfying~\eqref{eq:part-rect}, and let $(A,\iota_A)$ be a transversal
$P$-partite $\vec\C$-structure.
Suppose there is a $P$-partite embedding of $(A,\iota_A)$ into $(D,\iota_D)$.
Define
\begin{multline}
\label{eq:domb}
\dom B = \bigl\{x\in\dom D\colon
\iota_D(x)\in\iota_A[\dom A]\\
\text{ and } \{S\in\tau\colon  x\in S^D\} =
\{S\in\tau\colon \iota^{-1}_A(\iota_D(x))\in S^A\} \bigr\}
\end{multline}
and let $B$ be the substructure of~$D$ induced by~$\dom B$. Set
$\iota_B=\iota^{-1}_A\circ(\iota_D\restriction\dom B)$. Then $(B,\iota_B)$ is $A$-rectified.
\end{lem}

\begin{proof}
If $x\preceq^B x'$, then $x\preceq^D x'$ because $B$~is a substructure
of~$D$; thus $\iota_D(x)\preceq^P \iota_D(x')$ because $D$~is
$P$-partite; hence
$\iota_B(x)=\iota_A^{-1}(\iota_D(x))\preceq^A\iota_A^{-1}(\iota_D(x'))=\iota_B(x')$
because $\iota_A$~is a $(\sigma\cup\{\preceq\})$-embedding. If $S\in\tau$ and $x\in\dom B$,
then $x\in S^B$ iff $\iota_B(x)=\iota_A^{-1}(\iota_D(x))\in S^A$
by~\eqref{eq:domb}.  Let $R\in\sigma$ and $\bar x\in R^B\subseteq
R^D$. Then $\iota_D$~is injective on~$\bar x$ because $D$~is
$P$-partite; hence also $\iota_B$~is injective on~$\bar x$. Moreover,
$\iota_B(\bar x)\in R^A$ because $\iota_D$~is a homomorphism and
$\iota_A$~an embedding.

Conversely, suppose that $R\in\sigma$, $\bar x\in(\dom B)^{\ar(R)}$, $\iota_B$~is
injective on~$\bar x$ and $\iota_B(\bar x)\in R^A$. Let $e:A\eto
D$ be an embedding such that $\iota_D\circ e=\iota_A$; let $\bar
y=e(\iota_B(\bar x))$. Then $\bar y\in R^D$ and $\iota_D(\bar
y)=\iota_A(\iota_B(\bar x))=\iota_D(\bar x)$. Moreover, for any~$i$,
$\{S\in\tau\colon y_i\in S^D\} = \{S\in\tau\colon \iota_B(x_i)\in
S^A\}$ because $e$~is an embedding, and $\{S\in\tau\colon \iota_B(x_i)\in
S^A\}=\{S\in\tau\colon x_i\in S^D\}$ by~\eqref{eq:domb}. Therefore
$\bar x\in R^D$ by~\eqref{eq:part-rect}, whence $\bar x\in R^B$
because $B$~is a substructure of~$D$.
\end{proof}

\paragraph{Proof of Theorem~\ref{thm:main}.}
Let $\F$ be a set of finite $\sigma$-trees and let $\C$ be the expanded
class and $\vec\C$ the ordered expanded class for $\Forbh(\F)$. Consider
$A,B\in\vec\C$ and a positive integer~$r$. We construct $C\in\vec\C$
such that $C\to(B)^A_r$.

Let $A^\ast$, $B^\ast$ be the \sigord-reducts of $A$, $B$, respectively. By
Theorem~\ref{thm:nesrod} there exists an ordered \sigst~$P$ such that
$P\to(B^\ast)^{A^\ast}_r$. Define $(C_0,\iota_{C_0})$ by
\begin{align*}
&\dom C_0 = \tbinom{P}{B^\ast}\times\dom B,\\
&\text{for any $k$-ary $R\in\sigma\cup\tau$: }\\
&\qquad\qquad\qquad R^{C_0}=\left\{ ((f,x_1),(f,x_2),\dotsc,(f,x_k))\colon
	f\in\tbinom{P}{B^\ast}\text{ and }
	(x_1,x_2,\dotsc,x_k)\in R^B\right\},\\
&\iota_{C_0}:\dom C_0\to\dom P
	\text{ is defined by }
	\iota_{C_0}:(f,x)\mapsto f(x),\\
&\text{${\preceq}^{C_0}$ is any linear ordering
	that is preserved by $\iota_{C_0}$}.
\end{align*}
Thus $C_0$ (without the ordering) is isomorphic to a sum of structures, and each of the
summands is isomorphic to~$B$.
See Figure~\ref{fig:c0}.
Observe that $(C_0,\iota_{C_0})$ is a $P$-partite $\vec\C$-structure
because $\C$~is closed under taking sums.

\begin{figure}[h]
\begin{center}
\includegraphics[height=2.5in]{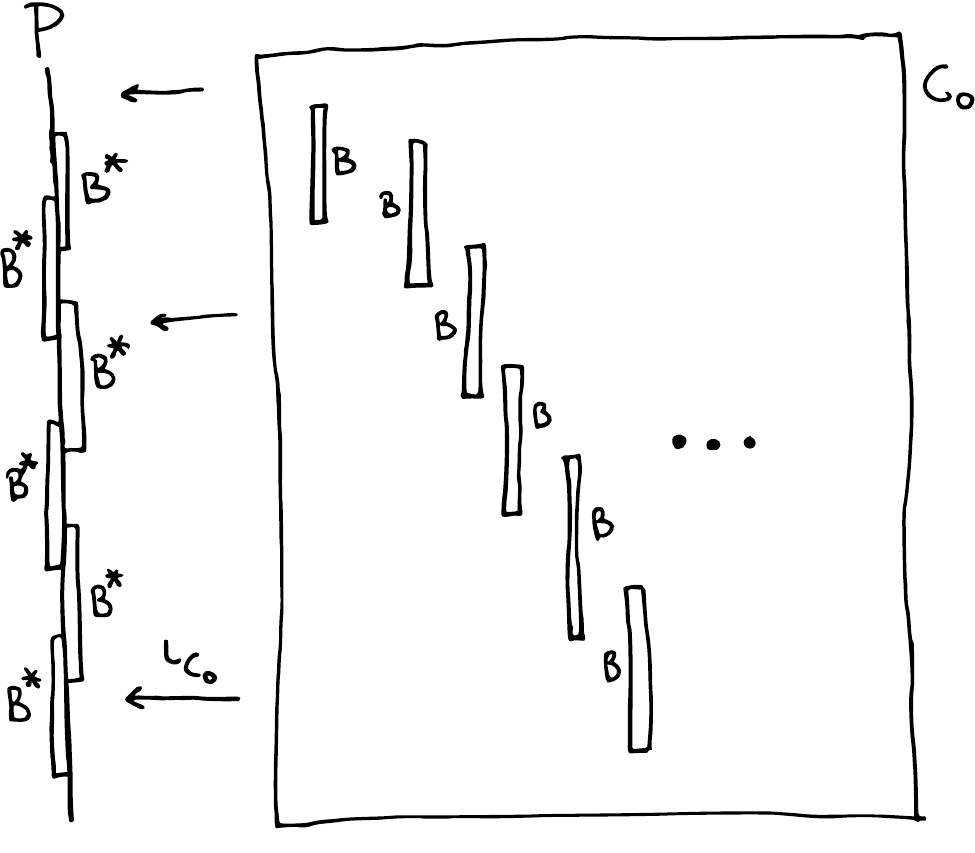}
\end{center}
\caption{$C_0$.}
\label{fig:c0}
\end{figure}

Unless $B$ is connected, $C_0$~may contain other copies of~$B$,
however. Therefore we call the embeddings $c_f:B\eto C_0$ with
$c_f(x)=(f,x)$ for some $f\in\binom{P}{B^\ast}$ \emph{distinguished};
the corresponding substructures of~$C_0$ are called \emph{distinguished
copies} of~$B$. Each of the distinguished copies of~$B$ in~$C_0$
forms a transversal structure. Hence
if $(D_0,\iota_{D_0})$~is obtained from~$(C_0,\iota_{C_0})$
by~\eqref{eq:rectidef}, then each of the distinguished embeddings
of~$B$ to~$C_0$ is also an embedding of~$B$ to~$D_0$.
In other words, for any $R\in\sigma$ none of the new $R$-tuples added by rectification lie
within a distinguished copy of~$B$.

Fix some numbering of $\tbinom{P}{A^\ast}=\{e_1,\dotsc,e_N\}$, the
set of all embeddings of~$A^\ast$ into~$P$. We will inductively
construct $P$-partite $\vec\C$-struc\-tures $(C_1,\iota_{C_1})$,
\dots, $(C_N,\iota_{C_N})$.

Let $k\in\{1,\dotsc,N\}$ and suppose $(C_{k-1},\iota_{C_{k-1}})$
has been constructed. If there is no $P$-partite
embedding of $(A,e_k)$ into $(C_{k-1},\iota_{C_{k-1}})$,%
\footnote{There is a copy $e_k[A^\ast]$ of~$A^\ast$ in~$P$, but it
may not lie within any copy of~$B^\ast$ in~$P$. Even if there is a
copy of~$A^\ast$ in~$C_{k-1}^\ast$ with the same trace, however,
the $\tau$-relations may not be the right ones, so that
these elements induce a substructure of~$C_{k-1}$ that is not
isomorphic to~$A$. In these cases, it can happen that no copy of~$A$
in~$C_{k-1}$ has trace~$e_k[A^\ast]$.}
let $(C_k,\iota_k)=(C_{k-1},\iota_{C_{k-1}})$.  Otherwise let
$(D_{k-1},\iota_{D_{k-1}})$ be defined from $(C_{k-1},\iota_{C_{k-1}})$
by the rectification construction~\eqref{eq:rectidef}. Let
$(B_k,\iota_{B_k})$ be the substructure of $(D_{k-1},\iota_{D_{k-1}})$
obtained as in~\eqref{eq:domb} (Lemma~\ref{lem:rect-rect}), using
$(A,e_k)$ in place of~$(A,\iota_A)$. Then
$(B_k,\iota_{B_k})$ is $A$-rectified and we can apply the Partite Lemma,
Lemma~\ref{lem:partite}, in order to get $A$-rectified $(E_k,\iota_{E_k})$
such that $(E_k,\iota_{E_k})\to (B_k,\iota_{B_k})^{(A,\id_A)}_r$
(w.r.t.~embeddings of $A$-rectified structures). Therefore
$(E_k,e_k\circ\iota_{E_k})\to(B_k,e_k\circ\iota_{B_k})^{(A,e_k)}_r$
(w.r.t.~embeddings of $P$-partite structures).

Now we proceed to construct~$C_k$ from~$E_k$ and several copies
of~$D_{k-1}$ by amalgamation. The construction described below
gives the result explicitly. For each $P$-partite copy of~$B_k$
in~$E_k$, we glue a copy of~$D_{k-1}$ onto~$E_k$, overlapping on
that copy of~$B_k$. Formally, put
\begin{align*}
&\dom C_k = \dom E_k \cup
\left(\tbinom{(E_k,\iota_{E_k})}{(B_k,\iota_{B_k})}\times
(\dom D_{k-1}\setminus\dom{B_k}) \right).\\
\intertext{Define $\lambda_k:\tbinom{(E_k,\iota_{E_k})}{(B_k,\iota_{B_k})}
\times\dom D_{k-1} \to \dom C_k$ by}
&\lambda_k : (g,x) \mapsto \begin{cases}
g(x)& \text{if $x\in\dom B_k$},\\
(g,x)& \text{otherwise},
\end{cases}\\
\intertext{(so $\lambda_k(g,x)$ gives the name of the element
of~$C_k$ corresponding to the element~$x$ in the ``$g$th'' copy
of~$D_{k-1}$ within~$C_k$). For any $\ell$-ary $R\in\sigma\cup\tau$, let}
&R^{C_k} = \Bigl\{ \bigl(\lambda_k(g,x_1),\dotsc,\lambda_k(g,x_\ell)\bigr)\colon
g\in\tbinom{(E_k,\iota_{E_k})}{(B_k,\iota_{B_k})},\ 
(x_1,\dotsc,x_\ell)\in R^{D_{k-1}} \Bigr\}.\\
\intertext{Furthermore define $\iota_{C_k}:\dom C_k\to\dom P$ by}
&\iota_{C_k}: y \mapsto e_k(\iota_{E_k}(y)) \quad\text{if $y\in\dom E_k$},\\
&\iota_{C_k}: (g,x) \mapsto \iota_{D_{k-1}}(x) \quad\text{otherwise.}
\end{align*}
Note that $\iota_{C_k}(\lambda_k(g,x))=\iota_{D_{k-1}}(x)$ for any
$x\in\dom D_{k-1}$ and $g\in\binom{(E_k,\iota_{E_k})}{(B_k,\iota{B_k})}$.
Finally, let $\preceq^{C_k}$ be a linear ordering
such that $y\preceq^{C_k}y'$ if $y\preceq^{E_k}y'$,
$\lambda_k(g,x)\preceq^{C_k}\lambda_k(g,x')$ if $x\preceq^{D_{k-1}}x'$,
and $z\preceq^{C_k}z'$ if $\iota_{C_k}(z)\preceq^P\iota_{C_k}(z')$.
See Figure~\ref{fig:ck}.

\begin{figure}[h]
\begin{center}
\includegraphics[height=2.5in]{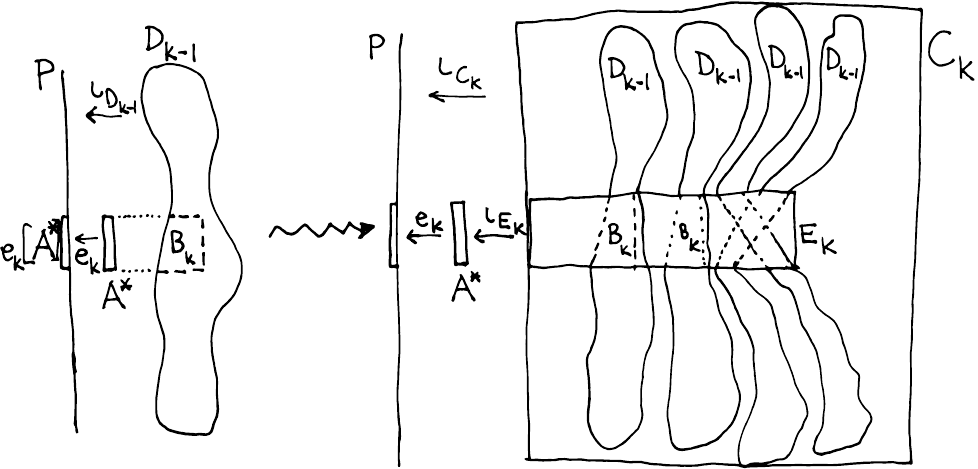}
\end{center}
\caption{$C_k$.}
\label{fig:ck}
\end{figure}

Notice that for a fixed~$g$, the mapping $\lambda_k(g,{-}):x\mapsto
\lambda_k(g,x)$ is an embedding of $(D_{k-1},\iota_{D_{k-1}})$ to
$(C_k,\iota_{C_k})$. By definition of~$D_{k-1}$, $\lambda_k(g,{-})$~is
an injective homomorphism of $(C_{k-1},\iota_{C_{k-1}})$ to
$(C_k,\iota_{C_k})$. The inclusion mapping is an embedding of~$E_k$
to~$C_k$ because $(E_k,\iota_{E_k})$ is $A$-rectified.

Now we claim that $(C_k,\iota_{C_k})$ is a $P$-partite $\vec\C$-structure.
Every one-element substructure of~$C_k$ is isomorphic to a one-element
substructure of~$D_{k-1}$, and every tuple of some relation~$R^{C_k}$,
$R\in\sigma$, corresponds to some tuple of~$R^{D_{k-1}}$ with the same tuple trace. Since
$D_{k-1}\in\vec\C$, by Lemma~\ref{lem:crit} we have $C_k\in\vec\C$.
To show that $\iota_{C_k}$~is a homomorphism, let $\bar y\in R^{C_k}$
for some $R\in\sigma$. Then $\bar y=\lambda_k(g,\bar x)$ for some
$g\in\binom{(E_k,\iota_{E_k})}{(B_k,\iota_{B_k})}$ and $\bar x\in
R^{D_{k-1}}$, and $\iota_{C_k}(\bar y)=\iota_{D_{k-1}}(\bar x)$.
Thus $\iota_{C_k}(\bar y)\in R^P$ because $\iota_{D_{k-1}}$~is a
homomorphism. By definition, $\iota_{C_k}$~also preserves~$\preceq^{C_k}$.
Hence $\iota_{C_k}:C_k^\ast\hto P$ is a homomorphism of \sigord-structures.

Next, to show that $\iota_k$~is injective on any $\bar y\in R^{C_k}$
(that is, $\bar y$~does not contain two distinct elements in the
same part), again use the fact that $\bar y =\lambda_k(g,\bar x)$
for some $g\in\binom{(E_k,\iota_{E_k})}{(B_k,\iota_{B_k})}$ and
$\bar x\in R^{D_{k-1}}$ and that $\lambda_k(g,-)$ preserves the
parts. Finally, let $y\in\dom C_k$. Then $y =\lambda_k(g,x)$ for
some $g\in\binom{(E_k,\iota_{E_k})}{(B_k,\iota_{B_k})}$ and $x\in
\dom{D_{k-1}}$. The one-element substructure of~$C_k^\ast$ induced
by~$y$ is isomorphic to the one-element substructure of~$D_{k-1}^\ast$
induced by~$x$, which in turn is isomorphic to the one-element
substructure of~$P$ induced by~$\iota_{D_{k-1}}(x)$ because
$D_{k-1}$~is $P$-partite. As $\iota_{C_k}(y)=\iota_{D_{k-1}}(x)$,
we have that the one-element substructure of~$C_k^\ast$ induced
by~$y$ is isomorphic to the one-element substructure of~$P$ induced
by~$\iota_{C_k}(y)$. Therefore $(C_k,\iota_{C_k})$ is a $P$-partite
$\vec\C$-structure.

Let $C=C_N$. We show that $C\to(B)^A_r$. Consider any colouring
$\chi:\tbinom{C}{A}\to\{1,\dotsc,r\}$. By downward induction we find
injective homomorphisms $h_k:(C_{k-1},\iota_{C_{k-1}})\hto(C_k,\iota_{C_k})$
for $k=N,N-1,\dotsc,1$ that have certain monochromatic properties.
Each $h_k=\lambda(g_k,-)$ for some $g_k\in\binom{(E_k,\iota_{E_k})}{(B_k,\iota_{B_k})}$.

Suppose $h_i$ is known for $i=N,\dotsc,k+1$ (possibly for no $i$ yet). If
$(C_k,\iota_{C_k})=(C_{k-1},\iota_{C_{k-1}})$, let $h_k$
be the identity mapping.%
\footnote{This is the case if there was no $P$-partite embedding
of $(A,e_k)$ into $(C_{k-1},\iota_{C_{k-1}})$; see previous footnote.}
Otherwise define the colouring
$\chi_k:\tbinom{(E_k,\iota_{E_k})}{(A,\id_A)}\to\{1,\dotsc,r\}$ by
setting $\chi_k(q)=\chi(h_N\circ h_{N-1}\circ \dotsb\circ  h_{k+1}\circ
q)$. (Observe that the composed mapping is indeed an embedding.) Since
$(E_k,\iota_{E_k})\to(B_k,\iota_{B_k})^{(A,\id_A)}_r$, there exists a
$\chi_k$-monochromatic embedding $g_k:(B_k,\iota_{B_k})\eto(E_k,\iota_{E_k})$. Let
$h_k=\lambda(g_k,{-})$.

Let $h=h_N\circ h_{N-1}\circ \dotsb \circ h_1: (C_0,\iota_{C_0})
\hto (C_N,\iota_{C_N})$.  By definition, $h$~is in fact the following
composed mapping:
\[ C_0 \xrightarrow{\id} D_0 \xrightarrow{h_1}
   C_1 \xrightarrow{\id} D_1 \xrightarrow{h_2}
   C_2 \xrightarrow{\id} D_2 \xrightarrow{h_3}
   \dotsb
    \xrightarrow{\id} D_{N-1} \xrightarrow{h_N} C_N , \]
where each of the identity mappings is a bijective homomorphism
obtained implicitly from Lemma~\ref{lem:rectification} and each
$h_k=\lambda_k(g_k,-)$ is an embedding. In general, $h$~is not an
embedding because new tuples of $\sigma$-relations are added during
rectification. We want to show, however, that no new tuples are
added to the distinguished copies of~$B$ in~$C_0$.
In other words, for any distinguished embedding $c_f: B\eto C_0$,
the mapping $h\circ c_f$ is an embedding: By definition, $h\circ
c_f$~is injective. For $R\in\sigma$, if $\bar x\in R^B$, then
$h(c_f(\bar x))\in R^C$ because $h$~is a homomorphism and $c_f$~an
embedding. If $h(c_f(\bar x))\in R^C$, then $f(\bar x)=\iota_{C_0}(c_f(\bar
x))=\iota_{C_N}(h(c_f(\bar x)))\in R^P$ because $\iota_{C_N}$~is a
homomorphism; hence $\bar x\in R^B$ because $f$~is an embedding.
For $S\in\tau$, we have $x\in S^B$ iff $h(c_f(x))\in S^C$ because
of~\eqref{eq:rectidef} and because $c_f$ and each~$h_k$~is an
embedding.

Consider
any $e_j\in\tbinom{P}{A^\ast}$. Any embedding~$d_j$ of~$A$ to~$C_0$
such that $\iota_{C_0}\circ d_j=e_j$ is also a $P$-partite embedding of
$(A,e_j)$ to $(C_0,\iota_{C_0})$. Moreover, $h\circ d_j$~is a $P$-partite
embedding of $(A,e_j)$ to $(C_N,\iota_{C_N})$. By definition of~$h_j$,
all such embeddings take the same colour under~$\chi$. Thus we define
$\chi_0:\tbinom{P}{A^\ast}\to\{1,\dotsc,r\}$ by $\chi_0(e_j)=\chi(h\circ d_j)$
if there exists $d_j\in\tbinom{C_0}{A}$ such that $\iota_{C_0}\circ d_j=e_j$,
and arbitrarily otherwise. By definition of~$P$ there exists
$\chi_0$-monochromatic $f\in\tbinom{P}{B^\ast}$. Let $c_f:B\eto C_0$ be
the distinguished embedding given by $c_f:x\mapsto (f,x)$.

Conclude the proof by observing that $h\circ c_f$~is a $\chi$-monochromatic
embedding of~$B$ to~$C$.
\qed

\section{A note on Datalog and Constraint Satisfaction}
\label{sec:datalog}

This section contains a brief description of constraint satisfaction
problems (CSPs), Datalog programs and their connection to Ramsey
theory, which was the original motivation for this research.
A more thorough introduction to CSPs and their complexity is given, e.g., in~\cite{CohJea:CCL,CSPSurveys}.
More details on Datalog can be found in~\cite{Kol:On-the-expressive-power};
a concise exposition, relevant to our setting, is given in~\cite{BulKroLar:Dualities}.

\paragraph{Tree Datalog.}

Let $\sigma$, $\tau$ be disjoint finite relational signatures such
that $\tau$~contains only unary relation symbols and a special
nullary relation symbol \goal.
A \emph{tree Datalog program} is a finite set of rules of the form
\[ S(x) \from t_1,\dotsc,t_n \]
or
\[ \goal \from t_1,\dotsc,t_n, \]
where $S\in\tau$ and each $t_i$ is an atomic formula $R_i(x_{i_1},\dotsc,x_{i_k})$
with $R_i\in \sigma\cup\tau$, so that at most one of the $R_i$'s belongs to~$\sigma$.
The part of the rule to the left of the arrow is called the \emph{head} of the rule;
the part to the right is called the \emph{body} of the rule.
In the context of a Datalog program, the predicates appearing in
the head of a rule are called \emph{IDBs} (\emph{intensional database predicates}),
whereas the predicates from~$\sigma$ are called \emph{EDBs} (\emph{extensional database predicates}).

A Datalog program can be ``executed'' on a \sigst~$A^\ast$ to recursively
construct the $\tau$-relations~$S^A$, $S\in\tau$, and consequently
a $(\sigma\cup\tau)$-expans\-ion~$A$ of~$A^\ast$, by repeatedly
adding elements of~$A^\ast$ to the unary relations~$S^A$ following
the program's rules.
The execution terminates when the application of any rule does not
result in adding an element into an IDB.
The \goal\ predicate is initially set to \false, and we say that
the Datalog program \emph{accepts}~$A^\ast$ if its \goal\ predicate
evaluates to \true\ on~$A^\ast$.

A Datalog program can be used to provide a finite description of
an infinite regular set of forbidden trees, as the following example
illustrates.

\begin{exa}
We revisit the example of $\F$ consisting of thunderbolts from
Section~\ref{sec:expand} (Figure~\ref{fig:thunder}).
The context is digraphs, so $\sigma$~contains one binary relation symbol~$A$.
The signature~$\tau$ obtained from Definition~\ref{dfn:expanded}
contains four relation symbols: $S_i$~for each~$\M_i$, $i=1,2,3,4$,
which will be IDBs of the corresponding Datalog program.
In addition to these unary IDBs, the Datalog will have a nullary IDB \goal.
The rules of the program are:
\begin{align*}
S_1(a) &\from  A(b,a); \\
S_1(a) &\from  A(a,b),\ S_2(b); \\
S_2(a) &\from  A(b,a),\ S_1(b); \\
S_3(a) &\from  A(a,b); \\
S_3(a) &\from  A(b,a),\ S_4(b); \\
S_4(a) &\from  A(a,b),\ S_3(b); \\
\Goal  &\from  S_1(a),\ S_4(a); \\
\Goal  &\from  S_2(a),\ S_3(a).
\end{align*}
This Datalog program accepts any given \sigst~$A^\ast$ if and only
if $A^\ast$~admits a homomorphism from some element of~$\F$.
If the program rejects~$A^\ast$, then it constructs a
$(\sigma\cup\tau)$-expansion~$A$ of~$A^\ast$, which will be the
canonical structure given by~\eqref{eq:canon}.
\end{exa}

Datalog provides an explanation for the membership tests for the
expanded class given in Section~\ref{sec:expand}.
Lemma~\ref{lem:one} corresponds precisely to the situation where
the \goal\ predicate is set to \true\ by the Datalog program
(meaning that $A^\ast$~is not $\F$-free).
In Lemma~\ref{lem:crit}, the tuple trace condition corresponds to
reaching a fixed point of the Datalog program, that is, the application
of no rule results in adding an element into an IDB.

\paragraph{Constraint satisfaction problems.}

For a \sigst~$H$, let $\Csp(H)=\{A\colon \exists f: A\hto H \}$.
Given a fixed \sigst~$H$, the \emph{non-uniform constraint satisfaction
problem} is to decide, for an input \sigst~$A$, whether $A\in\Csp(H)$ or not.
The problem's computational complexity depends on~$H$;
many polynomial-time cases can be explained by the existence of a
``nice'' \emph{obstruction set}~$\F$ such that $\Csp(H)=\Forbh(\F)$.
Following~\cite{HelNesZhu:DualPoly}, we say that $H$~has \emph{tree duality}
if $\Csp(H)=\Forbh(\F)$ for
\begin{equation}
\label{eq:treedu}
\F = \{ F\colon F \text{ is a $\sigma$-tree and there is no } f:F\hto H\}.
\end{equation}

For $\F$ given by \eqref{eq:treedu}, the $\pieq$-equivalence class
of any piece $(M,m)$ of some $F\in\F$ is fully determined by the set
\[ \Hh(M,m) = \{ f(m)\colon f:M\hto H \}, \]
because $\I(M,m)=\{(N,n)\colon (N,n) \text{ is a rooted $\sigma$-tree s.t.\ } \Hh(M,m) \cap \Hh(N,n) =\emptyset \}$.
Thus the expanded signature will always be finite and we can attempt
to index the $\tau$-relations by subsets of the domain of~$H$ (in
correspondence with the sets $\Hh(M,m)$).

What we get will be the canonical tree Datalog program:
The EDBs of the program are~-- as always~-- the relations in~$\sigma$.
There is an IDB~$S_X$ for every proper subset $X\subset\dom H$ 
(in the end we will only use the subsets which are definable in~$H$
by a positive existential first-order formula);
$S_X$~is unary unless $X=\emptyset$:
$S_\emptyset$~is nullary and we identify it with the \goal\ predicate.
Moreover, to simplify the description of the program's rules,
identify $S_{\dom H}$ with \true.
Given any $R\in\sigma$ of arity~$r$, $j\in\{1,2,\dotsc,r\}$ and nonempty
sets $X_i\subseteq\dom H$ for all $i\in\{1,2,\dotsc,r\}\setminus\{j\}$, put
\[ X_j = \{x_j\in\dom H\colon \exists (x_1,x_2,\dotsc,x_r)\in R^H \text{ s.t.\ } x_i\in X_i \text{ for each } i\ne j\}. \]
If $X_j\ne\dom H$, introduce the rule
\[ S_{X_j}(a_j) \from R(a_1,a_2,\dotsc,a_r),\ 
	S_{X_1}(a_1),\ \dotsc,\ S_{X_{j-1}}(a_{j-1}),\ S_{X_{j+1}}(a_{j+1}),\ \dotsc,\ S_{X_r}(a_r). \]
(At this point, subsets $X$ not definable in~$H$ by a positive
existential first-order formula will not appear in the head of any
rule and we can drop them~-- as well as any rules containing them
in their body.)

\begin{exa}
In our example (thunderbolts), it is well known that $\Forbh(\F)=\Csp(H)$
for $H=P_2$, the directed path $0\to1\to2$.
The description above results in the following Datalog program:
\begin{align*}
S_{\{0,1\}}(a) &\from  A(a,b); \\
S_{\{1,2\}}(a) &\from  A(b,a); \\
S_{\{0\}}(a) &\from  A(a,b),\ S_{\{0,1\}}(b); \\
S_{\{1,2\}}(a) &\from  A(b,a),\ S_{\{0,1\}}(b); \\
S_{\{0,1\}}(a) &\from  A(a,b),\ S_{\{1,2\}}(b); \\
S_{\{2\}}(a) &\from  A(b,a),\ S_{\{1,2\}}(b); \\
\Goal        &\from  A(a,b),\ S_{\{0\}}(b); \\
S_{\{1\}}(a) &\from  A(b,a),\ S_{\{0\}}(b); \\
S_{\{0\}}(a) &\from  A(a,b),\ S_{\{1\}}(b); \\
S_{\{2\}}(a) &\from  A(b,a),\ S_{\{1\}}(b); \\
S_{\{1\}}(a) &\from  A(a,b),\ S_{\{2\}}(b); \\
\Goal        &\from  A(b,a),\ S_{\{2\}}(b).
\end{align*}
You may notice that the program is different to the one we derived
from the thunderbolts. This is because the obstruction set~$\F$ has
changed: it now contains not only the thunderbolts, but also all
other trees that do not admit a homomorphism to~$P_2$.
\end{exa}

\paragraph{Pigeonhole classes.}

For \sutst s $B$, $C$ and a positive integer~$r$, let $C\to(B)_r^1$
denote the following statement:
Whenever the elements of~$C$ are coloured with $r$ colours,
there exists an embedding $g:B\eto C$ such that for any $b_1,b_2\in\dom B$,
if $g(b_1)$ and $g(b_2)$ induce isomorphic one-element substructures of~$C$,
then $g(b_1)$ and $g(b_2)$ have the same colour.
We say that a class~$\C$ of \sutst s is a \emph{pigeonhole class}
if for any structure $B\in\C$ and positive integer~$r$ there exists
$C\in\C$ such that $C\to(B)_r^1$.

Consider a set~$\F$ of $\sigma$-trees and let $\C$ be the expanded
class for $\Forbh(\F)$. Suppose that $\C$~contains finitely many
non-isomorphic one-element structures: this is certainly the case
if the expanded signature $\sigma\cup\tau$ is finite, in particular,
if $\F$~is given by~\eqref{eq:treedu}.
Then it follows by repeated application of Theorem~\ref{thm:main}
that the expanded class is a pigeonhole class (in fact, it follows
for the ordered expanded class, but if we only colour one-element
substructures, the ordering is not needed).
It has already been proved by Atserias and
Weyer~\cite{AtsWey:Decidable-Relationships} that any class with
free amalgamation is a pigeonhole class.

\paragraph{Characterising tree duality.}

Following the approach of~\cite{AtsWey:Decidable-Relationships}
further, let $H$ be a \sigst, let $\F$ be given by~\eqref{eq:treedu}
and let $\C$ be the expanded class for $\Forbh(\F)$.
We need to construct $V\in\C$ such that any tuple trace appearing
in~$\C$ will be an induced substructure of~$V$: we can take the sum
(disjoint union) of all such tuple traces.
Now let $W\in\C$ satisfy $W\to(V)_r^1$ for $r=\left|\dom H\right|$.
Define $\sim$ on $\dom V$ by putting $v_1\sim v_2$ if and only if
$v_1$ and $v_2$ induce isomorphic one-element substructures of~$V$
and put $U=V/{\sim}$.
Finally, let $U^\ast$, $V^\ast$ and~$W^\ast$ be the $\sigma$-reducts
of $U$, $V$ and~$W$, respectively.
Using the pigeonhole property of~$W$, one gets the following:

\begin{prop}[\cite{AtsWey:Decidable-Relationships}]
The following conditions are equivalent:
\begin{enumerate}[label=\({\alph*}]
\item There exists a homomorphism $W^\ast\hto H$.
\item There exists a homomorphism $U^\ast\hto H$.
\item $\Csp(H)=\Forbh(\F)$.
\qed
\end{enumerate}
\end{prop}

Thus we have found for any~$H$ a \sigst~$U^\ast=U^\ast(H)$ such
that $H$~has tree duality if and only if there is a homomorphism
$U^\ast(H)\hto H$.
This $U^\ast(H)$ appears to be intimately related to the power
structure of~\cite{FedVar:SNP}.

Lastly, we only mention
in passing that another connection between constraint satisfaction
problems and Ramsey classes is studied in~\cite{BodPin:Reducts-of-Ramsey}.

\section{Final comments}
\label{sec:final}

\paragraph{Universal structures.}

If $\F$ is a set of finite $\sigma$-trees, then by
Fra\"\i ss\'e's Theorem~\cite{Fraisse53}, Theorem~\ref{thm:amalgamation}
implies that the expanded class~$\C$ for $\Forbh(\F)$ has a Fraïssé
limit: a countable homogeneous \sutst~$U$ such that $\C$~is the class of all
finite substructures of~$U$. The $\sigma$-reduct~$U^\ast$ of~$U$
is a universal structure for $\Forbh(\F)$. For finite~$\F$ this
universal structure~$U^\ast$ is \ocat; the existence of such a
universal \ocat\ structure (and much more) was proved by Cherlin,
Shelah and Shi~\cite{CheSheShi:Universal}. If $\F$~is infinite,
$U^\ast$~is no longer necessarily \ocat\
(see~\cite{HubNes:Universal-structures} and the next paragraph);
however, it is model-complete.

\paragraph{Regular classes of trees.}

Recall that two pieces $(M,m)$, $(M',m')$ are $\pieq$-equivalent
if their incompatible sets are equal, that is, if $\I(M,m)=\I(M',m')$.
By Definition~\ref{dfn:expanded}, the signature~$\tau$ is finite
if and only if $\pieq$~has finitely many equivalence classes on the
pieces of the trees contained in~$\F$. In this case, we call~$\F$
a \emph{regular class} of $\sigma$-trees; the term is motivated by
a connection to regular languages, highlighted
in~\cite{ErdTarTar:Caterpillar-dualities}.  This definition of
regularity coincides with the one from~\cite{HubNes:Universal-structures}.
In~\cite{ErdPalTar:On-infinite-finite-tree-duality}, however, a
set~$\F$ of trees is defined to be regular if $\pieq$~has finitely
many equivalence classes on all rooted $\sigma$-forests. Let us
call such a set~$\F$ \emph{EPTT-regular}.  Obviously, every
EPTT-regular set is regular, but the converse does not hold.

Let $\UP(\F) = \{F\colon F \text{ is a $\sigma$-tree and there
exists } F'\in\F \text{ s.t. } F'\hto F \}$.
Obviously, $\Forbh(\UP(\F))=\Forbh(\F)$.
For a (not necessarily
finite) \sigst~$H$, define $\Csp(H) = \{A\colon A\hto H\}$.
From~\cite{ErdPalTar:On-infinite-finite-tree-duality,HubNes:Universal-structures}
and the results of this paper, we can conclude:

\begin{thm}
Let $\sigma$ be a finite relational signature and let $\F$ be a set
of finite $\sigma$-trees. Then the following are equivalent:
\begin{enumerate}[label=\({\alph*}]

\item $\UP(\F)$ is regular;

\item $\UP(\F)$ is EPTT-regular;

\item $\Forbh(\F)=\Csp(H)$ for some finite $\sigma$-structure~$H$;

\item there is a countable \ocat\ \sigst~$U^\ast$ universal for $\Forbh(\F)$;

\item there is a countable homogeneous \sutst~$U$ over a finite
signature $\sigma\cup\tau$ such that the $\sigma$-reduct of~$U$ is
universal for~$\Forbh(\F)$;

\item there is a Ramsey class~$\vec\C$ of ordered \sutst s over a
finite signature $\sigma\cup\tau$ such that $\Forbh(\F)$ is the
class of the $\sigma$-reducts of the structures in~$\vec\C$.
\qed
\end{enumerate}
\end{thm}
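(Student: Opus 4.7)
The plan is to prove a cycle of implications among (a)--(f), chaining the paper's Theorem~\ref{thm:amalgamation} and Theorem~\ref{thm:main} with results of~\cite{HubNes:Universal-structures,ErdPalTar:On-infinite-finite-tree-duality,CheSheShi:Universal}. I would first establish (a)~$\Leftrightarrow$~(b), then the block (a)~$\Leftrightarrow$~(c)~$\Leftrightarrow$~(d), then (d)~$\Leftrightarrow$~(e), and finally (e)~$\Leftrightarrow$~(f).

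The equivalence (a)~$\Leftrightarrow$~(b) is essentially bookkeeping. Every piece of a rooted $\sigma$-forest is isomorphic to a piece of the rooted connected component containing its root, which is a rooted $\sigma$-tree. Passing to $\UP(\F)$ guarantees that every tree contributing such a piece is already in the family, so finiteness of the $\pieq$-classes of pieces of trees in~$\UP(\F)$ coincides with finiteness on pieces of all rooted forests.

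For (a)~$\Leftrightarrow$~(c)~$\Leftrightarrow$~(d), I would invoke the tree-duality dictionary: regularity of $\UP(\F)$ is equivalent to finite duality, i.e.~$\Forbh(\F)=\Csp(H)$ for a finite template~$H$, by~\cite{ErdPalTar:On-infinite-finite-tree-duality,HubNes:Universal-structures}; and a finite template admits a countable \ocat\ universal structure for its CSP by~\cite{CheSheShi:Universal}. The converse arrow (d)~$\Rightarrow$~(a) is the subtle one: one shows that an \ocat\ universal $U^\ast$ forces the canonical signature~$\tau$ from Definition~\ref{dfn:expanded} to be finite, using that each $1$-type of~$U^\ast$ determines the set of $\pieq$-classes realised at that element and that $\omega$-categoricity admits only finitely many such $1$-types.

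For (d)~$\Leftrightarrow$~(e), combine Theorem~\ref{thm:amalgamation} with Fra\"\i ss\'e's Theorem: the expanded class~$\C$ is a countable amalgamation class, hence has a homogeneous Fra\"\i ss\'e limit~$U$; when (a) holds, $\tau$ is finite, so $U$ lives over a finite signature and its $\sigma$-reduct is universal for $\Forbh(\F)$. The implication (e)~$\Rightarrow$~(f) is then immediate from Theorem~\ref{thm:main} applied to the age of~$U$ together with an arbitrary linear ordering, and (f)~$\Rightarrow$~(e) follows from the fact that every Ramsey class with joint embedding is an amalgamation class with countably many isomorphism types and hence has a Fra\"\i ss\'e limit. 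The main obstacle will be the back-translation (d)~$\Rightarrow$~(a); once that is settled, the remaining arrows are compositions of statements already available in the literature and in the preceding sections of this paper.
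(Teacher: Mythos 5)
The paper offers no proof of this theorem; it is stated with a bare \texttt{\textbackslash qed} as a consequence of the cited works of Hubi\v{c}ka--Ne\v{s}et\v{r}il, Erd\H{o}s--P\'alv\"olgyi--Tardif--Tardos and Cherlin--Shelah--Shi together with Theorems~\ref{thm:amalgamation} and~\ref{thm:main}. Your proposal therefore supplies structure where the paper supplies only pointers, and the overall shape --- the cycle (a)--(b), (a)--(c)--(d), (d)--(e), (e)--(f) --- is reasonable. Two of your arrows, however, have genuine gaps.

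For (e)~$\Rightarrow$~(f) you propose to apply Theorem~\ref{thm:main} directly, but that theorem concerns the \emph{canonical} ordered expanded class for $\Forbh(\F)$, whose signature $\tau$ is determined by the $\pieq$-classes of pieces. Statement~(e) only asserts the existence of \emph{some} finite $\tau$ and some homogeneous $U$; it does not tell you that this $\tau$ is the canonical one, and Theorem~\ref{thm:main} does not cover arbitrary finite expansions. The argument has to pass through (a) (or (c)) first: (e) implies $U$ is $\omega$-categorical over a finite signature, hence so is $U^\ast$, giving (d); chain to (a) to conclude the canonical $\tau$ is finite; only then does Theorem~\ref{thm:main} deliver a Ramsey class over a finite signature. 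As written, your step would be circular or simply fails to reach the hypothesis of the theorem you cite.

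For (f)~$\Rightarrow$~(e) you invoke the fact that a Ramsey class is an amalgamation class and take its Fra\"\i ss\'e limit, but that limit is a homogeneous \emph{ordered} $(\sigma\cup\tau\cup\{\preceq\})$-structure. Statement~(e) asks for a homogeneous $\sigma\cup\tau$-structure, and the reduct obtained by forgetting the order need not be homogeneous: orbits can merge under the larger automorphism group without local isomorphisms of the reduct extending. To make the argument you would need either to show that the particular $\vec\C$ in (f) can be taken to be the free ordering of a hereditary amalgamation class of $\sigma\cup\tau$-structures (so the unordered reduct is still an amalgamation class), or again to detour through (d)/(a). Likewise your (d)~$\Rightarrow$~(a) sketch --- ``each $1$-type determines the set of $\pieq$-classes realised at that element'' --- is stated in the wrong direction; what you actually have is that each $S_\M$ is automorphism-invariant and hence a union of the finitely many $1$-type orbits, so there are only finitely many possible \emph{sets} $\{\M : x\in S_\M\}$. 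You then still need to conclude that this forces only finitely many $\pieq$-classes of pieces of trees in $\UP(\F)$, which requires an extra argument that distinct $\pieq$-classes are separated by some element of the universal structure. That step is the heart of the cited Hubi\v{c}ka--Ne\v{s}et\v{r}il result and cannot be waved away as a corollary of $\omega$-categoricity alone.
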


\paragraph{Extreme amenability.}
By a theorem of Kechris, Pestov and Todorčević~\cite{Topo-Dynamics},
the automorphism group of a Ramsey structure is extremely amenable. Thus
Theorem~\ref{thm:main} provides a continuum of examples of structures
with an extremely amenable automorphism group: take $\F'$ to be an infinite
antichain of $\sigma$-trees; then the Fraïssé limit of the ordered expanded
class for $\Forbh(\F)$ provides such an example for any subset~$\F$
of~$\F'$.

\paragraph{Problem.}
It would be interesting to classify all sets~$\F$ of \sigst s
for which the corresponding ordered expanded class for $\Forbh(\F)$ is
a Ramsey class.  In particular, is it the case for any set~$\F$ of
connected finite \sigst s?

\paragraph{Limits of the partite method.}
Nešetřil~\cite{Nes:CSS-Ramsey} asked whether one can prove all Ramsey
classes by a variant of the partite (amalgamation) construction. This
is certainly a question worth considering. It is not very satisfactory
that the definition of a partite structure is rather different each time:
compare~\cite{BotFon:Ramsey,Nesetril04,NesRod:Simple,NesRod:Two-proofs,NesRod:Ramsey,NesRod:Strong,NesRod:The-partite,NesetrilHandbookChapter}.
Also, the partite lemma is sometimes proved by induction
(as in~\cite{BotFon:Ramsey,ProVoi:A-short} and here),
sometimes by an application of the Hales--Jewett theorem (as
in~\cite{NesRod:Strong,NesRod:The-partite,NesetrilHandbookChapter}).

\section*{Acknowledgements}

I thank Albert Atserias, Manuel Bodirsky, Jarik Nešetřil and Claude Tardif for many
stimulating discussions, intriguing questions and useful comments.
Many thanks go to the anonymous referees whose questions and
objections have led to this paper being much better than it would
otherwise have been.

I am fully supported by a fellowship from the Swiss National Science
Foundation. Part of this work was done at Laboratoire d'Informatique
(LIX), CNRS UMR 7161, Ecole Polytechnique, Paris, where I was
supported by ERC Starting Grant ``CSP-Complexity'' no.~257039 of
Manuel Bodirsky. Another part was done when I was visiting Fields
Institute in Toronto within the Summer Thematic Program on the
Mathematics of Constraint Satisfaction.  Finally, a part was done
at Coffee\&Company in Kingston, Ontario. I also acknowledge the
support of the Institute for Theoretical Computer Science (ITI),
project 1M0545 of the Ministry of Education of the Czech Republic,
and EPSRC grant EP/I01795X/1.

\end{document}